\documentclass{amsart}

\usepackage{amsmath}
\usepackage{amssymb}
\usepackage[]{amsrefs}
\usepackage{xpatch}
\usepackage{kantlipsum} 
\calclayout

\xpatchcmd{\proof}{\itshape}{\prooflabelfont}{}{}
\newcommand{\prooflabelfont}{\bfseries}

\DefineSimpleKey{bib}{primaryclass}{}
\DefineSimpleKey{bib}{archiveprefix}{}

\BibSpec{arXiv}{%
  +{}{\PrintAuthors}{author}
  +{,}{ \textit}{title}
  +{}{ \parenthesize}{date}
  +{,}{ arXiv }{eprint}
  +{,}{ primary class }{primaryclass}
}

\usepackage{kantlipsum} 
\calclayout
\usepackage{extarrows}
\usepackage{amssymb}
\usepackage[utf8]{inputenc}
\newtheorem{theorem}{Theorem}[section]
\newtheorem{proposition}[theorem]{Proposition}
\newtheorem{lemma}[theorem]{Lemma}
\newtheorem{corollary}[theorem]{Corollary}
\theoremstyle{definition}

\theoremstyle{definition}
\newtheorem{definition}[theorem]{Definition}
\newtheorem{remark}[theorem]{Remark}

\usepackage{amsmath}

\numberwithin{equation}{section}
\usepackage{amsthm,amsmath,amssymb}
\usepackage[all,cmtip]{xy}
\usepackage{amsmath}
\usepackage{amssymb}
\usepackage{amsthm}
\usepackage[]{amsrefs}
\usepackage{hyperref}
\usepackage{xpatch}
\usepackage{amsfonts}
\usepackage{amssymb}
\usepackage[utf8]{inputenc}
\usepackage{amsthm}
\usepackage{stmaryrd}
\usepackage{csquotes}
\usepackage{extarrows}
\MakeOuterQuote{"}
\theoremstyle{definition}

\usepackage{enumitem}
\usepackage{tikz-cd}

\usepackage{blindtext}

\usepackage{url}

\DeclareMathOperator{\Gll}{g\ell\ell}

\DeclareMathOperator{\Gr}{gr}

\DeclareMathOperator{\Ord}{ord}

\begin{document}

\title[The index of a numerical semigroup ring]{The index of a numerical semigroup ring}

\author[Bartels]{Richard Bartels}

\author[Dajani]{Sarah Dajani}

\author[Koomson]{Gabriel Koomson}

\address{Department of Mathematics, Trinity College, 300 Summit St, Hartford, CT, 06106}

\email{richard.bartels@trincoll.edu}
\urladdr{https://sites.google.com/view/richard-bartels-math/home}
\email{sarah.dajani@trincoll.edu}
\email{gabriel.koomson@trincoll.edu}

\subjclass[2020]{11A07, 11A41, 11T06, 13A02, 13A30, 13C15, 13E05, 13E15, 13F25, 13H05, 13H10, 13H15, 13P05.}

\keywords{Reduction number, minimal reduction, principal reduction, generalized Loewy length, Auslander's index, Cohen-Macaulay, Gorenstein, local hypersurface, Hilbert-Samuel multiplicity, associated graded ring}

\title{Reduction numbers for witnesses to the generalized Loewy length}
\begin{abstract}  
Let $(R,\mathfrak{m})$ be a one-dimensional Cohen-Macaulay local ring. In this paper, we find the reduction number $r_{z}(\mathfrak{m}^d)$ of $\mathfrak{m}^d$ with respect to a witness $z\in \mathfrak{m}^d \setminus \mathfrak{m}^{d+1}$ to the generalized Loewy length $\text{g}\ell\ell(R)$ for several infinite families of hypersurfaces $\left\{(R,\mathfrak{m}) \right\}$. For every principal reduction $w$ of $\mathfrak{m}^d$, we have $\text{g}\ell\ell(R) \leq d(r_{w}(\mathfrak{m}^d)+1)$. We give examples of families $\left\{ (R,\mathfrak{m}) \right\}$ such that $d(r_{z}(\mathfrak{m}^d)+1)-\text{g}\ell\ell(R)=0$ and $d(r_{z}(\mathfrak{m}^d)+1)-\text{g}\ell\ell(R)=1$. In particular, we prove that for every prime $p$, there are local hypersurfaces $R$ over $\mathbb{F}_p$ such that $d(r_{z}(\mathfrak{m}^d)+1)-\Gll(R)=0$ and $d(r_{z}(\mathfrak{m}^d)+1)-\Gll(R)=1.$ We also prove that the difference $d(r_{z}(\mathfrak{m}^d)+1)-\text{g}\ell\ell(R)$ can vary independently of $\text{g}\ell\ell(R)-e(R)$.
\end{abstract}
\maketitle
\large{
\begin{center}
\section{Introduction}
\end{center}

Throughout this paper, $(R,\mathfrak{m})$ is a one-dimensional Cohen-Macaulay local ring, $k$ is a field, and $e(R)$ denotes the Hilbert-Samuel multiplicity of $R$. For a nonzero element $z \in R$, we let $\text{ord}_{R}(z)$ denote the {\it{order}} of $z$ in $R$; i.e., $\text{ord}_{R}(z)=d$ if $z \in \mathfrak{m}^d \setminus \mathfrak{m}^{d+1}$. If $z \in R$ is an order $d$ element, then the {\it{initial form}} of $z$, denoted $z^*$, is the residue class of $z$ in the quotient $\dfrac{\mathfrak{m}^d}{\mathfrak{m}^{d+1}}$. 
\\\text{}

A {\it{reduction}} of an ideal $I \subset R$ is an ideal $J \subset I$ such that $I^{n+1}=JI^n$ for $n \gg 0$. The smallest non-negative integer $n$ such that $I^{n+1}=JI^n$ is called the {\it{reduction number of $I$ with respect to $J$}} and denoted $r_{J}(I)$. A reduction $J$ of $I$ is {\it{minimal}} if $J$ contains no other reduction of $I$. The {\it{reduction number of}} $I$, denoted $r(I)$, is defined as \\
\begin{equation*}
r(I):=\text{min}\{r_{J}(I)\,|\,J \,\, \text{is a minimal reduction of} \,\, I\}.
\end{equation*}
\text{}

An ideal with no reduction other than itself is called {\it{basic}} \cite{HS06}[Chapter 8]. Suppose $z \in \mathfrak{m}$ with $\text{ord}_{R}(z)=d \geq 1$. Then $z$ is a {\it{superficial element of order $d$}} if there is an integer $c$ such that \\
\begin{equation*}
(\mathfrak{m}^{n+d}:z) \,\cap \, \mathfrak{m}^c=\mathfrak{m}^n
\end{equation*} for all $n \geq c$ \cite{De Stefani16}[Definition 2.1].\\ \text{}

Let $(R,\mathfrak{m})$ be a one-dimensional Cohen-Macaulay local ring with associated graded ring $\text{gr}_{\mathfrak{m}}(R)=\bigoplus\limits_{i=0}^{\infty}\dfrac{\mathfrak{m}^i}{\mathfrak{m}^{i+1}}$. Let $d \geq 1$. In \cite{De Stefani16}[Remark 2.3], De Stefani notes that if $z \in \mathfrak{m}^d \setminus \mathfrak{m}^{d+1}$ is a nonzerodivisor, then the following three conditions are equivalent: \\\\
\begin{enumerate}
\item[$(1)$] $z$ is a superficial element (of order $d$). \\
\item[$(2)$] $z$ is a reduction of $\mathfrak{m}^d$. \\
\item[$(3)$] The initial form $z^*$ is a homogeneous parameter in $\text{gr}_{\mathfrak{m}}(R)$.
\end{enumerate}\text{}\\
\begin{definition}\cite{LW12}[p.195]  Let $(R,\mathfrak{m})$ be a one-dimensional Cohen-Macaulay local ring. The {\it{generalized Loewy length}} of $R$, denoted $\text{g}\ell\ell(R)$, is defined as follows.\\
\[
\text{g}\ell\ell(R):=\text{min}\{i \geq 1\,|\,\mathfrak{m}^i \subset zR \,\,\text{for some nonzerodivisor}\,\, z \in \mathfrak{m}\}.
\]\text{}\\
A nonzerodivisor $z \in \mathfrak{m}$ is called a {\it{witness}} to $\text{g}\ell\ell(R)$ if $\mathfrak{m}^g \subset zR$, where $g=\text{g}\ell\ell(R)$.
\end{definition}
\text{}\\ \text{}

Suppose $z \in \mathfrak{m}^d \setminus \mathfrak{m}^{d+1}$ is a reduction of $\mathfrak{m}^d$ and $n=r_{z}(\mathfrak{m}^d)$ is the reduction number of $\mathfrak{m}^d$ with respect to $z$. Then 
\begin{equation*}
\mathfrak{m}^{(n+1)d} \subset z\mathfrak{m}^{nd} \subset zR.
\end{equation*}\text{}\\ 
Therefore, the generalized Loewy length $\text{g}\ell\ell(R)$ is a lower bound for $d(n+1)$:\\
\begin{equation*}
\text{g}\ell\ell(R) \leq d(n+1).
\end{equation*}\text{}

When $R$ is Gorenstein, we have $\text{index}(R) \leq \text{g}\ell\ell(R)$, where $\text{index}(R)$ denotes Auslander's index \cite{LW12}[p.195]. Let $S=k[[x,y]]$. For a local hypersurface $R=S/fS$, where $f \in (x,y)S$, we have $\text{ord}_{S}(f)=e(R)=\text{index}(R)$ \cite{Ding92}[Theorem 3.3]. Therefore, \\
\begin{equation}\label{equation:1.1}
\text{ord}_{S}(f)=e(R) \leq \text{g}\ell\ell(R).
\end{equation}\text{}\\
 In \cite{Bartels24}[section 3], we used (\ref{equation:1.1}), the following version of \cite{Bartels24}[Corollary 2.5], and \cite{Bartels24}[Lemma 3.4] to find the generalized Loewy length of several infinite families of one-dimensional hypersurfaces $\left\{R_i \right\}$.\text{}\\
\begin{proposition}\cite[Corollary 2.5]{Bartels24}\label{prop:1.1}
Let $S=k[[x,y]]$ and $R=S/fS$, where $f \in (x,y)S$. Let $\mathfrak{m}=(x,y)R$. Suppose $z \in \mathfrak{m}^d \setminus \mathfrak{m}^{d+1}$ such that the initial form $z^*$ is $\Gr_{\mathfrak{m}}(R)$-regular. Then 
\begin{equation*}
e(R) \leq \Gll (R) \leq e(R)+d-1.
\end{equation*} \text{}\\
Let $e=e(R)$. If $\mathfrak{m}^{e+d-1}$ is not principal, then $\mathfrak{m}^{e+d-1} \subset zR$.
\end{proposition}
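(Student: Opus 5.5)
The lower bound $e(R) \le \Gll(R)$ is exactly (\ref{equation:1.1}), which comes from Ding's theorem $\text{ord}_S(f)=e(R)=\text{index}(R)$ together with $\text{index}(R)\le \Gll(R)$ for the Gorenstein ring $R$. So the work is in the upper bound, and the plan is to prove the final assertion first and then deduce $\Gll(R)\le e+d-1$ from it.

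\textbf{Step 1: Hilbert function and regular initial form.} Write $G=\Gr_{\mathfrak m}(R)\cong k[x,y]/(f^*)$, where $f^*$ is the initial form of $f$, of degree $e=\text{ord}_S(f)$. Then
\[
\dim_k \mathfrak m^n/\mathfrak m^{n+1}=\min(n+1,e),
\]
so $\mu(\mathfrak m^n)=e$ for all $n\ge e-1$. Since $z^*$ is $G$-regular of degree $d$, multiplication by $z^*$ is injective $G_{n}\to G_{n+d}$. A standard argument (regular initial form implies $z\mathfrak m^{n}=z R\cap \mathfrak m^{n+d}$, so $\Gr(R/zR)=G/z^*G$) gives
\[
\dim_k (\mathfrak m^{n}+zR)/(\mathfrak m^{n+1}+zR)=\dim G_n-\dim G_{n-d}.
\]

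\textbf{Step 2: $\mathfrak m^{e+d-1}\subset zR$.} For $n\ge e+d-1$ we have $n-d\ge e-1$, so $\dim G_n=\dim G_{n-d}=e$. Hence $\mathfrak m^{n}\subset \mathfrak m^{n+1}+zR$ for all $n\ge e+d-1$. Iterating and using Krull's intersection theorem (or $\mathfrak m^N\subset zR$ for $N\gg0$, since $z$ is $\mathfrak m$-primary as a parameter) gives $\mathfrak m^{e+d-1}\subset zR$. Since $z^*$ is $G$-regular, $z$ is a nonzerodivisor, so $z$ is admissible in the definition of $\Gll$. This yields $\Gll(R)\le e+d-1$.

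\textbf{Step 3: the hypothesis that $\mathfrak m^{e+d-1}$ is not principal.} Step 2 does not use this hypothesis. Its role in the paper's phrasing is presumably to exclude the degenerate case $e=1$ (regular ring), where $\mathfrak m^{n}$ is principal. There the statement still holds, or is handled trivially via $\mu(\mathfrak m^{e+d-1})=e=1$. So I would note that the hypothesis guarantees $e\ge2$, and otherwise that the argument goes through directly.

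\textbf{Main obstacle.} The delicate point is the identification $\Gr(R/zR)=G/z^*G$, equivalently $zR\cap\mathfrak m^{n+d}=z\mathfrak m^n$, which follows from regularity of $z^*$ via Valabrega–Valla. I would verify this directly. Suppose $zr\in\mathfrak m^{n+d}$ and $r\in\mathfrak m^j\setminus\mathfrak m^{j+1}$ with $j<n$. Then $z^*r^*\neq0$ has degree $j+d<n+d$, a contradiction. Hence $r\in\mathfrak m^n$.
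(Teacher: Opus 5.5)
Your proof is correct. The paper gives no argument for this statement. It is quoted from \cite[Corollary 2.5]{Bartels24}, and the lower bound is (\ref{equation:1.1}), which you also cite, so what you add is a self-contained proof of the upper bound and of the inclusion. That part is sound:
\begin{itemize}
\item $G\cong k[x,y]/(f^*)$ has $\dim_k G_j=\min(j+1,e)$.
\item Multiplication by $z^*$ maps $G_{j-d}$ injectively into $G_j$, hence onto once $j-d\ge e-1$.
\item Your direct check that $zR\cap\mathfrak{m}^{j+d}=z\mathfrak{m}^j$ is fine.
\end{itemize}
Your reading of the extra hypothesis is also right. Since $\mu(\mathfrak{m}^{e+d-1})=e$, non-principality just means $e\ge 2$, and it is not needed for the inclusion as stated. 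It would only have content if $\subset$ meant proper inclusion, and that follows at once because $zR$ is principal.

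Your route proves more than the statement if you stop one step earlier. From $z^*G_{j-d}=G_j$ you get $\mathfrak{m}^j=z\mathfrak{m}^{j-d}+\mathfrak{m}^{j+1}$. Nakayama then yields $\mathfrak{m}^{j}=z\mathfrak{m}^{j-d}$ for all $j\ge e+d-1$, with no need for Krull's theorem or the Artinian quotient $R/zR$. Conversely, $\mathfrak{m}^{j+d}=z\mathfrak{m}^{j}$ forces $G_{j+d}=z^*G_j$, hence $\dim G_{j+d}=\dim G_j$, i.e.\ $j\ge e-1$. Thus $r_z(\mathfrak{m}^d)=\lceil (e-1)/d\rceil$ for every $z$ whose initial form is $G$-regular of degree $d$. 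When $z$ is moreover a witness, Proposition \ref{prop:1.3} gives $d(r_z(\mathfrak{m}^d)+1)-\Gll(R)=d\lceil (e-1)/d\rceil-(e-1)$. This recovers uniformly the equalities $\mathfrak{m}^{p+3}=z\mathfrak{m}^{p+1}$, $\mathfrak{m}^{2p+4}=z\mathfrak{m}^{2p+2}$ and $\mathfrak{m}^{2^n+2}=z\mathfrak{m}^{2^n}$. Propositions \ref{prop:2.8}, \ref{prop:2.9}, \ref{proposition:2.13} and \ref{prop:3.3} obtain these by explicit manipulation of the generators $z_i$. The paper uses the statement only as a bound on $\Gll(R)$ and computes these reduction numbers case by case.
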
 \text{}
\begin{proposition}\cite[Lemma 3.4]{Bartels24}\label{prop:1.2} Let $R=k[[x,y]]/(f)$, where $e(R)=\Ord_{S}(f) \geq 2$. If $R$ has no nonzerodivisor of the form $\alpha x + \beta y$, where $\alpha, \beta \in k$, then \\
\[
e(R) < \Gll(R).
\]
\end{proposition}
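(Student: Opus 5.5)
We already have $e(R)\le \Gll(R)$ from (\ref{equation:1.1}), so the plan is to argue by contradiction. Write $e=e(R)=\Ord_S(f)$ and suppose $\Gll(R)=e$. Then there is a witness $z$, a nonzerodivisor of $R$ with $\mathfrak{m}^e\subset zR$. Let $d=\Ord_R(z)$ and choose a lift $Z\in S$ of $z$. The condition $\mathfrak{m}^e\subset zR$ becomes
\[
(x,y)^e S\subset (f,Z)S,
\]
and since $z$ is a nonzerodivisor, $\ell(R/zR)=\ell(S/(f,Z))$ is the intersection multiplicity of $f$ and $Z$. That multiplicity is at least $e\cdot d$, with equality exactly when the initial forms $f^*$ and $Z^*$ have no common factor. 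I would split into the cases $d=1$ and $d\ge 2$.

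\emph{Case $d=1$.} Here $Z=L+h$ with $L=\alpha x+\beta y\neq 0$ linear and $h\in (x,y)^2$. After a linear change of coordinates, and using the implicit function theorem in $k[[x,y]]$, I would write $(Z)=(y-g(x))$ with $\Ord(g)\ge 2$, so that $L$ becomes $y$. Then
\[
R/zR\cong k[[x]]/(f(x,g(x))),
\]
so $\ell(R/zR)=\Ord_x f(x,g(x))$. On the other hand, $\mathfrak{m}^e\subset zR$ forces $x^e\in (f(x,g(x)))$, i.e.\ $\Ord_x f(x,g(x))\le e$. Since this order is always $\ge e$, it equals $e$, and that equality holds precisely when $L$ does not divide $f^*$. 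But if $L\nmid f^*$, then $L\nmid f$. Because $L$ is prime in $S$, this makes $L$, that is $\alpha x+\beta y$, a nonzerodivisor in $R$, which contradicts the hypothesis.

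\emph{Case $d\ge 2$.} First, $\mathfrak{m}^e\subset zR\subset \mathfrak{m}^d$ forces $d\le e$, using $\mathfrak{m}^{i}\neq\mathfrak{m}^{i+1}$ in the one-dimensional ring $R$. I would then compare degree-$e$ pieces of initial ideals. The inclusion $(x,y)^e\subset (f,Z)$ means the initial ideal $\mathrm{in}(f,Z)$ must contain every form of degree $e$, an $(e+1)$-dimensional space. If $f^*$ and $Z^*$ are coprime, then $\mathrm{in}(f,Z)=(f^*,Z^*)$ is a complete intersection of type $(e,d)$. Its degree-$e$ part has dimension $(e-d+1)+1=e-d+2$, which is at most $e<e+1$ because $d\ge 2$. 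So coprimality is impossible.

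If $f^*$ and $Z^*$ share a factor, the bound $\ell(S/(f,Z))>ed\ge 2e$ must be played against $(x,y)^e\subset(f,Z)$. For that I would use the Hilbert function of $R$: $\ell(\mathfrak{m}^i/\mathfrak{m}^{i+1})=\min(i+1,e)$, so $\mu(\mathfrak{m}^e)=e$. Since $z$ is a nonzerodivisor, $\mathfrak{m}^e=zJ$ with $J\cong\mathfrak{m}^e$, and then $\ell(R/zR)=\ell(R/\mathfrak{m}^e)-\ell(R/J)$. The aim is to show $\ell(R/J)$ is too large for $\ell(R/zR)\ge 2e$.

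I expect this subcase (common tangent, $d\ge 2$) to be the main obstacle. My first attempt there would be a standard-basis / Buchberger-type analysis of $(f,Z)$. The goal is to show that cancellations between $af$ and $bZ$ can never produce all $e+1$ monomials of degree $e$ when $d\ge2$. If that fails, I would fall back on the length bookkeeping through $J\cong\mathfrak{m}^e$ just described.
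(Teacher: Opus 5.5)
\textbf{There is a genuine gap: the subcase $d\ge 2$ with $\gcd(f^*,Z^*)\neq 1$ is never proved.} Your case $d=1$ is correct, and so is the coprime subcase of $d\ge 2$ (the count $e-d+2\le e<e+1$ in degree $e$). For the remaining subcase you only name two possible strategies and carry out neither. The standard-basis analysis is a hope, not an argument. The identity $\ell(R/zR)=\ell(R/\mathfrak{m}^e)-\ell(R/J)$ gives nothing without a lower bound on $\ell(R/J)$, and you never supply one. So the proof as written does not exclude a witness of order $d\ge 2$ whose tangent cone shares a line with that of $f$.

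\textbf{The missing idea is to count in every degree below $e$, not just in degree $e$.} You do not need to know the initial ideal $I^*$ of $I=(f,Z)$ exactly. Take a lift $Z\in(x,y)^dS$, so $Z^*$ is a nonzero form of degree $d$. Then $I^*\supseteq Z^*\,k[x,y]_{n-d}$ in each degree $n$, common factor or not. Since $(x,y)^e\subseteq I$, you get
\[
\ell(R/zR)=\ell(S/I)=\sum_{n=0}^{e-1}\dim_k\bigl(k[x,y]/I^*\bigr)_n\le\sum_{n=0}^{e-1}\min(n+1,d)=de-\tfrac{d(d-1)}{2},
\]
using $d\le e$. Combined with your lower bound $\ell(S/I)\ge de$, this forces $d=1$ and $\ell(R/zR)=e$. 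That is exactly transversality $L\nmid f^*$, and your $d=1$ argument then finishes the proof. This makes the coprime/non-coprime split unnecessary.

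\textbf{Alternatively, your $J$-route can be completed.} Set $J=(\mathfrak{m}^e:z)\cong\mathfrak{m}^e$, so $\mu(J)=e$. By Hilbert--Burch, an ideal of $S$ with $n$ minimal generators lies in $(x,y)^{n-1}$. Apply this to the preimage of $J$ in $S$, which needs at least $e$ generators, to get $J\subseteq\mathfrak{m}^{e-1}$. Hence $\ell(R/J)\ge e(e-1)/2$, and therefore $\ell(R/zR)\le e(e+1)/2-e(e-1)/2=e$, which again forces $d=1$.
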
\text{}
\begin{remark}
Each hypersurface $(R, \mathfrak{m})$ in the aforementioned families $\left\{ R_{i} \right\}$ has a witness $z \in \mathfrak{m}^d \setminus \mathfrak{m}^{d+1}$ to $\text{g}\ell\ell(R)$ with initial form $z^*$ that is a nonzerodivisor in $\text{gr}_{\mathfrak{m}}(R)$. Therefore, $z$ generates a reduction of $\mathfrak{m}^d$. Moreover, the value of $\text{g}\ell\ell(R)-e(R)$ depends entirely on the order of $z$.\end{remark}
\text{}\\
\begin{proposition}\cite[Proposition 3.3]{Bartels24}\label{prop:1.3}
Let $S=k[[x,y]]$ and $R=k[[x,y]]/(f)$, where $f \in (x,y)S$. Let $\mathfrak{m}=(x,y)R$. Suppose $z \in \mathfrak{m}^{d} \setminus \mathfrak{m}^{d+1}$ such that the initial form $z^*$ is $\Gr_{\mathfrak{m}}(R)$-regular. If $z$ is a witness to $\Gll(R)$, then
\[
\Gll (R) = e(R) +d -1.
\]
\end{proposition}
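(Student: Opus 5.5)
The plan is to combine the upper bound of Proposition~\ref{prop:1.1} with a matching lower bound. The lower bound comes from comparing Hilbert functions of $\Gr_{\mathfrak{m}}(R)$ and of $\Gr_{\mathfrak{m}}(R)/z^*\Gr_{\mathfrak{m}}(R)$. Since $z^*$ is $\Gr_{\mathfrak{m}}(R)$-regular, Proposition~\ref{prop:1.1} already gives $\Gll(R)\le e(R)+d-1$. So it suffices to show that if $g=\Gll(R)$ and $\mathfrak{m}^g\subset zR$, then $g\ge e+d-1$, where $e=e(R)$.

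\textbf{Step 1: $zR\cap\mathfrak{m}^n=z\mathfrak{m}^{n-d}$ for $n\ge d$.} I would prove this directly from the regularity of $z^*$. Take $za\in\mathfrak{m}^n$ with $a\neq 0$, and let $j=\Ord_R(a)$; this is finite because $R$ is Noetherian local, so $\bigcap_i \mathfrak{m}^i=0$ by Krull. Then $z^*a^*\neq 0$ in degree $j+d$, because $z^*$ is a nonzerodivisor. Hence $za\notin\mathfrak{m}^{j+d+1}$, so $j+d\ge n$, and therefore $a\in\mathfrak{m}^{n-d}$. Applying this with $n=g$ gives $\mathfrak{m}^g=z\mathfrak{m}^{g-d}$. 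In particular $\mathfrak{m}^g\subset z\mathfrak{m}^{g-d}+\mathfrak{m}^{g+1}$, which says that the degree-$g$ component of $\Gr_{\mathfrak{m}}(R)/z^*\Gr_{\mathfrak{m}}(R)$ vanishes.

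\textbf{Step 2: Hilbert function count.} Write $H(n)=\dim_k \mathfrak{m}^n/\mathfrak{m}^{n+1}$. For the hypersurface $R=S/fS$ with $\Ord_S(f)=e$, we have $\Gr_{\mathfrak{m}}(R)\cong k[x,y]/(f^*)$ with $\deg f^*=e$, so $H(n)=\min(n+1,e)$. Since $z^*$ is regular of degree $d$, the exact sequence $0\to \Gr_{\mathfrak{m}}(R)(-d)\xrightarrow{z^*}\Gr_{\mathfrak{m}}(R)\to \Gr_{\mathfrak{m}}(R)/z^*\Gr_{\mathfrak{m}}(R)\to 0$ shows that the quotient has Hilbert function $H(n)-H(n-d)$. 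Step 1 then forces $H(g)=H(g-d)$. As $d\ge1$ and $H$ is strictly increasing until it reaches the value $e$, this requires $H(g-d)=e$, i.e.\ $g-d+1\ge e$. So $g\ge e+d-1$, and together with Proposition~\ref{prop:1.1} we get $\Gll(R)=e(R)+d-1$.

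\textbf{Main obstacle.} The only delicate point is Step 1, the Valabrega--Valla type identity $zR\cap\mathfrak{m}^n=z\mathfrak{m}^{n-d}$. It needs the regularity of $z^*$ and Krull's intersection theorem, both used as above. The identification $\Gr_{\mathfrak{m}}(R)\cong k[x,y]/(f^*)$ is standard for hypersurfaces.
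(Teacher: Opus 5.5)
Your proof is correct. The paper gives no argument for this statement; it is imported from \cite[Proposition 3.3]{Bartels24}, so there is no in-paper proof to compare against. Your argument is a complete, self-contained proof of the lower bound $\Gll(R)\ge e(R)+d-1$. It combines two ingredients: the identity $zR\cap\mathfrak{m}^n=z\mathfrak{m}^{n-d}$, which you derive correctly from the regularity of $z^*$ and Krull's intersection theorem, and the Hilbert function $H(n)=\min(n+1,e)$ of $\Gr_{\mathfrak{m}}(R)\cong k[x,y]/(f^*)$.

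Two small remarks follow.

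First, you apply Step 1 with $n=g$ without checking that $g\ge d$. This is harmless: if $g<d$, then $\mathfrak{m}^g\subseteq zR\subseteq\mathfrak{m}^{d}\subseteq\mathfrak{m}^{g+1}$, which contradicts $H(g)=\min(g+1,e)\ge 1$.

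Second, you do not actually need Proposition~\ref{prop:1.1} for the upper bound. Your own computation gives $H(n)-H(n-d)=0$ for all $n\ge e+d-1$, so $\mathfrak{m}^n=z\mathfrak{m}^{n-d}+\mathfrak{m}^{n+1}$. Nakayama then yields $\mathfrak{m}^{e+d-1}=z\mathfrak{m}^{e-1}\subseteq zR$. Moreover, $z$ is a nonzerodivisor by the argument of Step 1 applied to $za=0$. Hence your method shows that for every $z$ with $z^*$ regular of degree $d$, the least $n$ with $\mathfrak{m}^n\subseteq zR$ is exactly $e+d-1$. This gives the upper bound of Proposition~\ref{prop:1.1} and the present statement in one stroke.
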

\text{}

In section 2, we use our knowledge of $\text{g}\ell\ell(R)$ for these families $\{R_i\}$ and the inequality $\text{g}\ell\ell(R) \leq d(n+1)$ to determine the reduction number $n=r_{z}(\mathfrak{m}^d)$ of $\mathfrak{m}^d$ with respect to witnesses $z$ to $\text{g}\ell\ell(R)$. We prove that for every prime $p$, there are hypersurfaces $(R,\mathfrak{m})$ over $\mathbb{F}_p$ such that $d(n+1)-\Gll(R)=0$ and $d(n+1)-\Gll(R)=1$ (Theorem \ref{theorem:2.16}). We also show that the difference $d(n+1)-\text{g}\ell\ell(R)$ can vary independently of $\text{g}\ell\ell(R)-e(R)=d-1$. 

In section 3, we find a reduction $z \in \mathfrak{m}^2_n \setminus \mathfrak{m}^3_n$ of $\mathfrak{m}_n^2$ for an additional family of hypersurfaces $\left\{(R_n, \mathfrak{m}_n ) \right\}_{n=1}^{\infty}$. We use this reduction and Proposition \ref{prop:1.2} to find the generalized Loewy length for this family and prove that, for $n \geq 1$, the reduction $z \in \mathfrak{m}^2_n$ is a witness to $\text{g}\ell\ell(R_n)$ such that \\
\[
2(2^{n-1}+1)=\text{g}\ell\ell(R_n)=2(r_{z}(\mathfrak{m}_{n}^2)+1)
\]
and
\[
r_{z}(\mathfrak{m}_n^2)=2^{n-1}.
\]
\text{}\\
\begin{center}
\section{Reduction numbers for local hypersurfaces} 
\end{center}

In the following, $\mathbb{F}_p$ denotes the field with $p$ elements, where $p$ is prime. We prove that for every prime $p$, there are hypersurfaces $(R,\mathfrak{m})$ over $\mathbb{F}_p$ with order $d \geq 1$ witnesses $z$ to $\text{g}\ell\ell(R)$ such that $d(r_{z}(\mathfrak{m}^d)+1)-\text{g}\ell\ell(R)=0$ and $d(r_{z}(\mathfrak{m}^d)+1)-\text{g}\ell\ell(R)=1$.\text{}\\\\
\begin{lemma}\cite[section 2]{D'Anna01}\label{lemma:2.2.1}
Let $R$ be a Noetherian ring and $I \subset R$ an ideal that contains a nonzerodivisor. If $w_1, w_2 \in I$ are elements that generate principal reductions of $I$, then $r_{w_1}(I)=r_{w_2}(I)$.
\end{lemma}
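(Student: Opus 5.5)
The plan is to give an intrinsic description of $r_w(I)$ for any $w$ generating a principal reduction of $I$, in terms of a ring that does not depend on $w$. Work in the total ring of fractions $Q$ of $R$. First I will check that any such $w$ is a nonzerodivisor. If $I^{n+1}=wI^n$ and $aw=0$, then $aI^{n+1}=0$. But $I^{n+1}$ contains a power of the nonzerodivisor in $I$, so $a=0$. Hence fractions $x/w^m$ make sense in $Q$.

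Next I will form the $R$-submodules $I^m/w^m\subset Q$ and the blowup ring $S_w=R[I/w]=\bigcup_{m\ge 0} I^m/w^m$. Since $wI^m\subset I^{m+1}$, we have $I^m/w^m=wI^m/w^{m+1}\subset I^{m+1}/w^{m+1}$, so these modules form an increasing chain. Moreover, $I^m/w^m=I^{m+1}/w^{m+1}$ if and only if $wI^m=I^{m+1}$, because $w$ is a nonzerodivisor. Once $I^{n+1}=wI^n$ holds it persists for all larger exponents, so the chain is constant from index $n$ on. Consequently
\[
r_w(I)=\min\{n\ge 0 \,:\, I^n=w^nS_w\},
\]
since $I^n/w^n=S_w$ exactly when the chain has stabilized at $n$.

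It then remains to show that $S_{w_1}=S_{w_2}$ and that $w_1S=w_2S$ in this common ring $S$. Then the condition $I^n=w_1^nS$ is equivalent to $I^n=w_2^nS$, and the two reduction numbers coincide. Put $S_1=S_{w_1}$. From $I^{n+1}=w_1I^n$ we get $I\cdot (I^n/w_1^n)\subset w_1 (I^n/w_1^n)$, so $IS_1=w_1S_1$; in particular $w_2/w_1\in S_1$. For the inverse, let $m=r_{w_2}(I)$. Then $w_1^{m+1}\in I^{m+1}=w_2I^m$, so $w_1^{m+1}=w_2y$ with $y\in I^m$. Hence $w_1/w_2=y/w_1^m\in S_1$, and $w_1/w_2$ is a unit of $S_1$. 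It follows that $I/w_2=(I/w_1)(w_1/w_2)\subset S_1$, so $S_2\subset S_1$. By symmetry the two rings are equal, and $w_1S=w_2S$ because $w_1/w_2$ is a unit.

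The main obstacle is this unit step: proving that $w_1/w_2$ genuinely lies in the blowup ring. It is exactly where both reduction hypotheses are used, one for each direction. The remaining steps are bookkeeping with the increasing chain $I^m/w^m$.
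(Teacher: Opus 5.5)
Your proof is correct and complete. The paper gives no argument of its own for this lemma; it only cites Section~2 of D'Anna's work. So your write-up is a genuine self-contained justification, and it never uses that $R$ is Noetherian.

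Two steps can be shortened. First, $w_2/w_1\in S_1$ holds simply because $w_2/w_1\in I/w_1$. Second, $IS_1=w_1S_1$ is automatic, since $a=w_1\cdot(a/w_1)$ for every $a\in I$. So, apart from the regularity of $w_1$, the inclusion $S_{w_2}\subseteq S_{w_1}$ uses only the reduction hypothesis on $w_2$, and only in your unit step.

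If you prefer to avoid that computation, use an intrinsic description of the blowup.
\begin{itemize}
\item For any regular $w\in I$, one has $I^n:_Q I^n\subseteq I^n/w^n\subseteq S_w$, because $qI^n\subseteq I^n$ forces $qw^n\in I^n$.
\item The equality $I^{n+1}=wI^n$ is equivalent to $(I/w)I^n\subseteq I^n$, i.e.\ to $S_w\subseteq I^n:_Q I^n$, since $I^n:_Q I^n$ is a subring of $Q$ containing $R$.
\end{itemize}
Hence, whenever $wR$ is a reduction of $I$, we get $S_w=\bigcup_n(I^n:_Q I^n)$ and
\[
r_w(I)=\min\Bigl\{n\ge 0 \,:\, I^n:_Q I^n=\bigcup_k \bigl(I^k:_Q I^k\bigr)\Bigr\}.
\]
This formula does not mention $w$, so independence of the principal reduction becomes tautological. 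It also shows the reduction number depends only on the chain of rings $I^n:_Q I^n$.

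Your route reaches the same key fact $S_{w_1}=S_{w_2}$ by direct computation with the two generators. That is equally valid, and it shows explicitly where each reduction hypothesis enters.
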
 
\text{}
\begin{lemma}\cite[Corollaries 8.3.6, 8.3.9]{HS06}\label{lemma:2.1.1}
Let $(R,\mathfrak{m})$ be a one-dimensional Noetherian local ring. Let $I \subset R$ be an $\mathfrak{m}$-primary ideal. Suppose $w \in I$ is a reduction of $I$. Then $w$ is a minimal reduction of $I$.
\end{lemma}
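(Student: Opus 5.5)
The plan is a direct Nakayama argument, without appealing to analytic spread. Let $J \subset wR$ be any reduction of $I$. The goal is to show $J = wR$; this is exactly the minimality of the reduction $wR$.

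\textbf{Step 1.} First I will show that if $J \neq wR$, then $J \subset w\mathfrak{m}$. Every element of $J$ has the form $wa$ with $a \in R$. If some such $a$ were a unit of the local ring $R$, then $w = (wa)a^{-1} \in J$, which gives $wR \subset J$ and hence $J = wR$. So if $J \ne wR$, every element of $J$ lies in $w\mathfrak{m}$.

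\textbf{Step 2.} Next I fix a common exponent. If $I^{r+1} = KI^r$ for some ideal $K \subset I$, then multiplying by $I$ gives $I^{s+1} = KI^s$ for all $s \geq r$. Choose $N$ at least as large as both $r_{w}(I)$ and $r_J(I)$. Then
\[
wI^N = I^{N+1} = JI^N \subset w\mathfrak{m} I^N = \mathfrak{m}\,(wI^N).
\]

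\textbf{Step 3.} Since $R$ is Noetherian, $wI^N$ is a finitely generated $R$-module. Nakayama's lemma then gives $wI^N = 0$, and therefore $I^{N+1} = wI^N = 0$.

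\textbf{Step 4.} This contradicts the hypotheses. Since $I$ is $\mathfrak{m}$-primary, $\mathfrak{m}^t \subset I$ for some $t$. Then $\mathfrak{m}^{t(N+1)} \subset I^{N+1} = 0$, so $\mathfrak{m}$ is nilpotent and $\dim R = 0$, contrary to $\dim R = 1$. Hence $J = wR$, and $wR$ is a minimal reduction of $I$.

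The only delicate point is that $w$ need not be a nonzerodivisor, because $R$ is merely Noetherian local of dimension one, not necessarily Cohen--Macaulay. So $w$ cannot be cancelled from the display in Step 2. Applying Nakayama to the module $wI^N$ itself, as in Step 3, avoids this cancellation. The argument therefore needs only that $I^{N+1} \neq 0$, which holds because $\mathfrak{m}$ is not nilpotent.
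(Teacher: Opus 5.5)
Your proof is correct, and it takes a genuinely different route from the paper. The paper gives no argument of its own. It cites Huneke--Swanson, where the result comes from the theory of analytic spread: for an $\mathfrak{m}$-primary ideal in a one-dimensional local ring one has $\ell(I)=\dim R=1$, and a reduction generated by $\ell(I)$ elements is minimal, which is proved through the dimension theory of the fiber cone $\bigoplus_{n\ge 0} I^n/\mathfrak{m}I^n$. You bypass all of this with one observation special to cyclic ideals, namely that every proper subideal of $wR$ lies in $w\mathfrak{m}$, followed by Nakayama. You also correctly apply Nakayama to the module $wI^N$ instead of cancelling $w$, which matters because $w$ may be a zerodivisor here. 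Your route is shorter, self-contained, and more general: it shows that in any Noetherian local ring a principal reduction of a non-nilpotent ideal is minimal. The hypotheses that $R$ is one-dimensional and $I$ is $\mathfrak{m}$-primary are used only to guarantee $I^{N+1}\neq 0$, and nothing about the residue field is needed. The cited route, in exchange, places the lemma inside a framework that also covers reductions with $\ell(I)>1$ generators in higher dimension, where your cyclic-ideal trick has no analogue. It is also the same machinery the paper needs anyway in Lemma~\ref{lemma:2.3} to show that every minimal reduction is principal when the residue field is infinite.
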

\text{}
\begin{lemma}\label{lemma:2.3}
Let $(R,\mathfrak{m})$ be a one-dimensional Cohen-Macaulay local ring with infinite residue field. Suppose $I \subset R$ is an $\mathfrak{m}$-primary ideal. Then every minimal reduction of $I$ is principal and $r(I)=r_{w}(I)$ for every principal reduction $w$ of $I$.
\end{lemma}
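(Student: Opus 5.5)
The plan is to prove the two assertions of Lemma \ref{lemma:2.3} separately: first that minimal reductions of $I$ are principal, then that the reduction number does not depend on the chosen principal reduction.

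\textbf{Minimal reductions are principal.} Since $R$ is one-dimensional with infinite residue field, the analytic spread of the $\mathfrak{m}$-primary ideal $I$ satisfies $\ell(I)=\dim R=1$. By the standard theory of reductions \cite{HS06}[Chapter 8], every minimal reduction of $I$ is minimally generated by $\ell(I)$ elements, so every minimal reduction $J$ has the form $J=wR$ for a single $w\in I$.

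To keep the argument concrete, one can also see the existence half directly. The fiber cone $F(I)=\bigoplus_n I^n/\mathfrak{m}I^n$ has dimension $1$. Because $k$ is infinite, prime avoidance applied to the degree-one part yields an element $w\in I$ whose image in $I/\mathfrak{m}I$ is a homogeneous system of parameters of $F(I)$. Then $wR$ is a reduction of $I$. The statement that \emph{every} minimal reduction is principal is the part for which I would quote \cite{HS06}[Corollary 8.3.9] rather than reprove.

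\textbf{Principal reductions are minimal.} Conversely, Lemma \ref{lemma:2.1.1} shows that any principal reduction $wR$ is a minimal reduction. Hence the minimal reductions of $I$ are exactly the principal reductions of $I$.

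\textbf{Independence of the reduction number.} First note that $I$ contains a nonzerodivisor: $R$ is Cohen-Macaulay of dimension one and $I$ is $\mathfrak{m}$-primary, so $I$ is not contained in any associated prime (these are all minimal). Lemma \ref{lemma:2.2.1} therefore applies and gives $r_{w_1}(I)=r_{w_2}(I)$ for any two principal reductions $w_1,w_2$. Consequently the minimum defining $r(I)$ is taken over a set on which $r_J(I)$ is constant, so $r(I)=r_w(I)$ for every principal reduction $w$.

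The only delicate point is the claim that every minimal reduction is principal; all the other steps are formal consequences of the lemmas already stated. That claim rests on the infinite residue field hypothesis, and I will cite it from \cite{HS06} rather than derive it.
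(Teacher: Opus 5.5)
Your proposal is correct and follows the paper's proof essentially step for step: $\ell(I)=1$ together with the infinite residue field makes every minimal reduction principal, Lemma \ref{lemma:2.1.1} gives the converse, and Lemma \ref{lemma:2.2.1} (which applies because $I$ contains a nonzerodivisor) makes $r_w(I)$ independent of $w$. One small citation fix: the statement that every minimal reduction is generated by $\ell(I)$ elements is Proposition 8.3.7 of Huneke--Swanson, while Corollary 8.3.9 is what gives $\ell(I)=1$.
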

\text{}
\begin{proof}
Since $I$ is $\mathfrak{m}$-primary, $I$ contains a nonzerodivisor and has height one. By \cite{HS06}[Corollary 8.3.9], the analytic spread of $I$ equals one. Since $k$ is infinite, it follows from \cite{HS06}[Proposition 8.3.7] that every minimal reduction of $I$ is a principal reduction of $I$. Therefore, principal reductions and minimal reductions of $I$ are the same by Lemma \ref{lemma:2.1.1}, and the result follows from Lemma \ref{lemma:2.2.1}.
\end{proof}
\text{}
\begin{remark}
The hypersurfaces $(R,\mathfrak{m})$ in Proposition \ref{prop:2.3} satisfy $e(R)=\text{g}\ell\ell(R)=r_{z}(\mathfrak{m}) + 1$, where $z \in \mathfrak{m}\setminus \mathfrak{m}^2$ is a witness to $\text{g}\ell\ell(R)$ that generates a reduction of $\mathfrak{m}$. When the residue field $k$ is infinite, the reduction number $r(\mathfrak{m})$ equals $r_{w}(\mathfrak{m})$ for every principal reduction $w$ of $\mathfrak{m}$.
\end{remark}
\text{}\\
\begin{proposition}\label{prop:2.3}
Let $n \geq 1$ and $k$ a field such that $\text{char}\, k \neq 2$ and $\text{char}\,k \nmid 1 + (-2)^n$. Let 
\[
R=k[[x,y]]\bigg/\left(xy\left(x^n + y^n \right) \right)
\] and $\mathfrak{m}=(x,y)R$. Then $z=x+2y \in \mathfrak{m} \setminus \mathfrak{m}^2$ is a witness to $\Gll(R)$ that generates a minimal reduction of $\mathfrak{m}$. We have \\
\[
n+2=e(R)=\Gll(R)=r_{z}(\mathfrak{m}) +1
\] and 
\[
r_{z}(\mathfrak{m})=n+1.
\] \text{}\\ Therefore, $r_{w}(\mathfrak{m})=n+1$ for every principal reduction $w$ of $\mathfrak{m}$. If $k$ is infinite, then $r(\mathfrak{m})=n+1$.
\end{proposition}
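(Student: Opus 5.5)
The plan is to show that $z=x+2y$ is a nonzerodivisor and that its initial form $z^*$ is $\Gr_{\mathfrak{m}}(R)$-regular. Then $z$ generates a reduction of $\mathfrak{m}$, and the chain of inequalities $e(R)\le \Gll(R)\le d(r_z(\mathfrak{m})+1)$ with $d=1$ will be squeezed into equalities by a direct computation of $r_z(\mathfrak{m})$.

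\textbf{Step 1: $z^*$ is regular.} Put $f=xy(x^n+y^n)$. Since $f$ is homogeneous of degree $n+2$, we have $\Gr_{\mathfrak{m}}(R)\cong k[x,y]/(f)$ and $e(R)=\Ord_S(f)=n+2$. The linear form $x+2y$ is a zerodivisor in $k[x,y]/(f)$ exactly when it divides $f$, i.e.\ when $f(-2y,y)=0$. We compute
\[
f(-2y,y)=-2y^2\bigl((-2)^n+1\bigr)y^n .
\]
This is nonzero precisely because $\operatorname{char}k\neq 2$ and $\operatorname{char}k\nmid 1+(-2)^n$. Hence $z^*$ is $\Gr_{\mathfrak{m}}(R)$-regular. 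It follows that $z$ is a nonzerodivisor in $R$ (a standard consequence of regularity of the initial form) and, by De Stefani's equivalences, that $z$ generates a reduction of $\mathfrak{m}$. By Lemma \ref{lemma:2.1.1}, this reduction is minimal.

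\textbf{Step 2: compute $r_z(\mathfrak{m})$.} Since $z^*$ is regular on the graded ring, $\mathfrak{m}^{i+1}=z\mathfrak{m}^i$ holds if and only if $[\Gr_{\mathfrak{m}}(R)/z^*]_{i+1}=0$. Now
\[
\Gr_{\mathfrak{m}}(R)/z^*\Gr_{\mathfrak{m}}(R)\cong k[y]/\bigl(f(-2y,y)\bigr)=k[y]/(y^{n+2}),
\]
using that the coefficient of $f(-2y,y)$ is a unit. Its degree-$i$ piece is nonzero for $i\le n+1$ and zero for $i=n+2$. Hence $r_z(\mathfrak{m})=n+1$.

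\textbf{Step 3: the chain of equalities.} Applying Proposition \ref{prop:1.1} with $d=1$ gives $e(R)\le\Gll(R)\le e(R)$, so $\Gll(R)=n+2$. Moreover $\mathfrak{m}^{n+2}=z\mathfrak{m}^{n+1}\subset zR$, so $z$ is a witness to $\Gll(R)$. Together these give $n+2=e(R)=\Gll(R)=r_z(\mathfrak{m})+1$. The statement about every principal reduction $w$ follows from Lemma \ref{lemma:2.2.1}. When $k$ is infinite, $r(\mathfrak{m})=n+1$ follows from Lemma \ref{lemma:2.3}.

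The main obstacle is the lifting from the graded ring to $R$ in Step 2: we need the equivalence $\mathfrak{m}^{i+1}=z\mathfrak{m}^i\iff[\Gr/z^*]_{i+1}=0$. For the direction ``$\Leftarrow$'', vanishing of this degree piece gives $\mathfrak{m}^{i+1}=z\mathfrak{m}^i+\mathfrak{m}^{i+2}$, and Nakayama's lemma then yields $\mathfrak{m}^{i+1}=z\mathfrak{m}^i$. For ``$\Rightarrow$'', the equality $\mathfrak{m}^{i+1}=z\mathfrak{m}^i$ shows that $[\Gr/z^*]_{i+1}=0$.
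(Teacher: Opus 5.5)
Your argument is correct, but it runs in the opposite direction from the paper's. The paper quotes \cite[Proposition 3.7]{Bartels24} for both $\mathfrak{m}^{n+2}=z\mathfrak{m}^{n+1}$ and $e(R)=\Gll(R)=n+2$. It then gets the lower bound $r_z(\mathfrak{m})\geq n+1$ from $\Gll(R)\leq r_z(\mathfrak{m})+1$: a smaller reduction number would put $\mathfrak{m}^{r+1}$ inside $zR$ and contradict $\Gll(R)=n+2$. You instead compute $r_z(\mathfrak{m})$ intrinsically. You read off both bounds from the single identification $\Gr_{\mathfrak{m}}(R)/z^*\Gr_{\mathfrak{m}}(R)\cong k[y]/(y^{n+2})$ plus Nakayama, and only afterwards deduce $\Gll(R)=n+2$ and the witness property. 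You do this via Proposition \ref{prop:1.1} with $d=1$; alternatively, (\ref{equation:1.1}) together with $\mathfrak{m}^{n+2}\subset zR$ suffices. The paper's route is shorter and showcases its theme of letting a known value of $\Gll(R)$ force the reduction number, but it outsources the key equality $\mathfrak{m}^{n+2}=z\mathfrak{m}^{n+1}$. Yours proves that equality directly and identifies the characteristic hypotheses as exactly the condition $x+2y\nmid f$. It in fact shows more: for any $k[[x,y]]/(f)$ and any linear form $\ell$ not dividing the initial form of $f$, one has $r_{\ell}(\mathfrak{m})=e(R)-1$, so in the order-one case $e(R)=\Gll(R)=r_z(\mathfrak{m})+1$ is automatic. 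Two minor points. The Nakayama equivalence in Step 2 does not actually use regularity of $z^*$. Also, Step 2 already exhibits $zR$ as a reduction, so the appeal to De Stefani is unnecessary. Regularity of $z^*$ is still what you need for $z$ to be a nonzerodivisor and to invoke Proposition \ref{prop:1.1}.
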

\text{}\\
\begin{proof}
By \cite{Bartels24}[Proposition 3.7], we have 
\[
\mathfrak{m}^{n+2}=z\mathfrak{m}^{n+1}
\]
and $e(R)=\text{g}\ell\ell(R)=n+2$. Let $r=r_{z}(\mathfrak{m})$. Then $r \leq n+1$. If $r < n+1$, then $\mathfrak{m}^{r +1}=z\mathfrak{m}^{r} \subset zR$, so $\text{g}\ell\ell(R) \leq r +1<n+2=\text{g}\ell\ell(R)$, a contradiction. By Lemma \ref{lemma:2.2.1}, we have $r_{w}(\mathfrak{m})=n+1$ for every principal reduction $w$ of $\mathfrak{m}$. If $k$ is infinite, then $r_{z}(\mathfrak{m})=r(\mathfrak{m})$ by Lemma \ref{lemma:2.3}.  
\end{proof} \text{}\\
\begin{corollary}\label{corollary:2.6}
Let $k$ be a field of characteristic $p>2$ and let $n \geq 0$. Let \\
\[
R=k[[x,y]] \bigg/\left(xy\left(x^{p^n} + y^{p^n} \right) \right),
\] \text{}\\ $\mathfrak{m}=(x,y)R$, and $z=x+2y \in \mathfrak{m} \setminus \mathfrak{m}^2$. Then $z$ is a witness to $\Gll(R)$ and generates a reduction of $\mathfrak{m}$. We have \\
\[
p^n + 2 = e(R)=\Gll(R)=r_{z}(\mathfrak{m}) + 1
\] and 
\[
r_{z}(\mathfrak{m})=p^n + 1.
\] \text{}\\ Therefore, $r_{w}(\mathfrak{m})=p^n+1$ for every principal reduction $w$ of $\mathfrak{m}$. If $k$ is infinite, then $r(\mathfrak{m})=p^n + 1$.
\end{corollary}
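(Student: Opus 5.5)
This corollary is the special case $n' = p^n$ of Proposition \ref{prop:2.3}, so I will check that the hypotheses of Proposition \ref{prop:2.3} hold with $n$ replaced by $p^n$ and then read off the conclusions. First, $p^n \geq 1$ for every $n \geq 0$, so the exponent is admissible. Second, $\text{char}\, k = p > 2$, so $\text{char}\, k \neq 2$.

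The only real point is the divisibility condition: we need $p \nmid 1 + (-2)^{p^n}$. Since $p$ is odd, $p^n$ is odd, so $(-2)^{p^n} = -2^{p^n}$. Hence
\[
1+(-2)^{p^n} = 1 - 2^{p^n}.
\]
By Fermat's little theorem, $2^p \equiv 2 \pmod p$. Iterating this $n$ times gives $2^{p^n} \equiv 2 \pmod p$. Therefore
\[
1 - 2^{p^n} \equiv 1 - 2 = -1 \not\equiv 0 \pmod p.
\]
So $\text{char}\, k \nmid 1+(-2)^{p^n}$. This is the step requiring an actual argument, though it is elementary. The case $n=0$ also works directly: $1+(-2)=-1$.

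With the hypotheses verified, Proposition \ref{prop:2.3} applied to $R = k[[x,y]]/(xy(x^{p^n}+y^{p^n}))$ gives the following:
\begin{itemize}
\item $z = x+2y$ is a witness to $\Gll(R)$ and generates a minimal, in particular a principal, reduction of $\mathfrak{m}$;
\item $p^n+2 = e(R) = \Gll(R) = r_z(\mathfrak{m})+1$ and $r_z(\mathfrak{m}) = p^n+1$;
\item $r_w(\mathfrak{m}) = p^n+1$ for every principal reduction $w$ of $\mathfrak{m}$, which is also obtainable from Lemma \ref{lemma:2.2.1};
\item $r(\mathfrak{m}) = p^n+1$ when $k$ is infinite, which is also obtainable from Lemma \ref{lemma:2.3}.
\end{itemize}
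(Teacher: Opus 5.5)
Your proposal is correct and is the paper's argument: specialize Proposition~\ref{prop:2.3} to the exponent $p^n$ and check that $p \nmid 1+(-2)^{p^n}$. The paper states this divisibility without proof, while you justify it with Fermat's little theorem: $(-2)^{p^n} \equiv -2 \pmod p$, so $1+(-2)^{p^n} \equiv -1 \pmod p$.
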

\text{}\\
\begin{proof}
For a prime $p>2$, we have $p \nmid 1 + (-2)^{p^n}$.
\end{proof}
\text{}\\
\begin{remark}
For all hypersurfaces in the family $\left\{(R,\mathfrak{m}) \right\}$ in Proposition \ref{prop:2.6}, we have $
\text{g}\ell\ell(R)-e(R)=1$. In Propositions \ref{prop:2.8} and \ref{prop:2.9}, we determine the reduction number $r_{z}(\mathfrak{m}^2)$ of $\mathfrak{m}^2$ with respect to the witness $z=x^2+xy+y^2 \in R$ to $\text{g}\ell\ell(R)$ for two (possibly infinite) subsets of this family (see \cite{Bartels24}[Remark 3.13] and \cite{CDR04}[p.66]). For the first subset, we have \\ 
\[
2(r_{z}(\mathfrak{m}^2) +1)-\text{g}\ell\ell(R)=0.
\]
For the second, we have 
\[
2(r_{z}(\mathfrak{m}^2) +1)-\text{g}\ell\ell(R)=1.
\]
\end{remark} \text{}\\
\begin{proposition}\cite[Proposition 3.12]{Bartels24}\label{prop:2.6}
Let $m, n \geq 0$ and let $p>3$ be a prime such that $2$ is a primitive root modulo $p^2$. Let \\
\[
R=\mathbb{F}_2[[x,y]] \bigg/ \left(xy \left(x^{2^n p^m} + y^{2^n p^m} \right)\right)
\] \text{}\\ and $\mathfrak{m}=(x,y)R$.
Then \\
\[
\Gll(R)=e(R)+1=2^n p^m +3
\] \text{}\\ and $z=x^2+xy+y^2 \in R$ is a witness to $\Gll(R)$. If $m=1$, then we need only assume that $2$ is a primitive root modulo $p$.
\end{proposition}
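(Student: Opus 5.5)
The plan is to apply Proposition \ref{prop:1.1} and Proposition \ref{prop:1.2} to $z=x^2+xy+y^2$, which has order $d=2$. Write $N=2^np^m$ and $f=xy(x^N+y^N)$, so that $e(R)=\Ord_S(f)=N+2$. Once two facts are established,
\[
N+2=e(R)<\Gll(R)\le e(R)+1=N+3,
\]
and this forces $\Gll(R)=N+3$. The last sentence of Proposition \ref{prop:1.1} then shows that $z$ is a witness, provided $\mathfrak{m}^{N+3}$ is not principal. That holds because $\mathfrak{m}^{N+3}$ needs $N+3$ generators, far more than one; its minimal number of generators is $\min(N+4,e)=e\ge 2$.

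\emph{Step 1: the lower bound.} By Proposition \ref{prop:1.2} it suffices to show that no linear form $\alpha x+\beta y$ with $\alpha,\beta\in\mathbb{F}_2$ is a nonzerodivisor. Over $\mathbb{F}_2$ the only nonzero linear forms are $x$, $y$ and $x+y$. Each divides $f$: $x$ and $y$ obviously, and $x+y$ divides $x^N+y^N$ because $N$ is a power of $2$ times an odd number, so $x+y$ divides $x^{p^m}+y^{p^m}$ and hence its $2^n$-th power. Each is therefore a zerodivisor, and $e(R)<\Gll(R)$.

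\emph{Step 2: the upper bound.} This is the main obstacle. We must show that $z^*$ is $\Gr_{\mathfrak{m}}(R)$-regular. Since $f$ is homogeneous, $\Gr_{\mathfrak{m}}(R)\cong \mathbb{F}_2[x,y]/(f)$, and $z^*=x^2+xy+y^2$. The latter is irreducible over $\mathbb{F}_2$, so $z^*$ is regular exactly when $x^2+xy+y^2$ does not divide $f$. This is equivalent to $x^2+x+1\nmid x^N+1$ in $\mathbb{F}_2[x]$, that is, to $\omega^N\neq 1$ for a primitive cube root of unity $\omega\in\mathbb{F}_4$. That in turn means $3\nmid N=2^np^m$, which holds since $p>3$.

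This check seems too easy for the hypothesis that $2$ is a primitive root mod $p^2$ to be irrelevant. I therefore expect that in the cited argument the primitive-root condition is really used to show that no element $\alpha x+\beta y$ with coefficients in a larger field is regular. Equivalently, it controls the factorization of $x^{p^m}+1$ over $\mathbb{F}_2$: since $2$ is a primitive root mod $p^k$ for all $k$, the cyclotomic factors $\Phi_{p^k}$ are irreducible of degree $\varphi(p^k)$, which is even and hence coprime to $\deg(x^2+x+1)$ being odd-type irrelevant. For the statement as given, however, the argument above over $\mathbb{F}_2$ should suffice.

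For the final remark about $m=1$: the primitive-root condition mod $p$ already makes $\Phi_p$ irreducible over $\mathbb{F}_2$, which is all that is needed in that case. I would include this only if the factorization route is used.
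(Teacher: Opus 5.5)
Your proof is correct. Its skeleton is the one the paper relies on. Proposition~\ref{prop:1.2} gives $e(R)<\Gll(R)$, since $x$, $y$, $x+y$ all divide $f$. Proposition~\ref{prop:1.1} with $d=2$ gives the upper bound and the witness property once $z^*$ is known to be $\Gr_{\mathfrak{m}}(R)$-regular; the paper later invokes exactly this regularity from the proof of Proposition~\ref{prop:2.6}.

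Where you genuinely differ is in how that regularity is verified. The paper does not reproduce the cited proof, but the hypotheses point to how it goes. The condition that $2$ be a primitive root mod $p^2$ (or only mod $p$ when $m=1$) is precisely what makes each cyclotomic polynomial $\Phi_{p^k}$, $1\le k\le m$, irreducible over $\mathbb{F}_2$. This strongly suggests that the cited argument uses the irreducible factorization $x^{N}+1=\bigl((x+1)\prod_{k=1}^{m}\Phi_{p^k}(x)\bigr)^{2^n}$. It then notes that no factor has degree $2$, because $\varphi(p^k)=p^{k-1}(p-1)\ge 4$ for $p>3$.

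Your test is: $x^2+x+1\mid x^N+1$ iff $\omega^N=1$ iff $3\mid N$. This avoids the factorization altogether and shows the primitive-root hypothesis is superfluous. The statement holds for every prime $p\neq 3$, indeed whenever $3\nmid N$. Correspondingly, the hypothesis could be dropped from Propositions~\ref{prop:2.8} and~\ref{prop:2.9}, where it enters only through Proposition~\ref{prop:2.6}. The factorization route yields all irreducible factors of $f$, but none of that information is needed here.

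Your closing speculation should be dropped, though it does not affect the proof. Proposition~\ref{prop:1.2} concerns only $\alpha,\beta\in k=\mathbb{F}_2$. Over $\mathbb{F}_4$ the form $x+\omega y$ is actually a nonzerodivisor when $3\nmid N$, so the hypothesis cannot be about linear forms over larger fields. Your sentence about parity of degrees is also garbled: $\deg(x^2+x+1)=2$, and the relevant point is simply $\varphi(p^k)\neq 2$. Finally, a small slip: $\mathfrak{m}^{N+3}$ has $e=N+2$ minimal generators, not $N+3$, as your own formula $\min(N+4,e)$ shows.
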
 \text{}\\\\
\begin{proposition}\label{prop:2.8}
Let $p>3$ be a prime such that $3 \mid p-1$ and $2$ is a primitive root modulo $p$. Let \\
\[
R=\mathbb{F}_2[[x,y]] \bigg/ \left(xy \left(x^p + y^p \right) \right)
\] \text{}\\ and $\mathfrak{m}=(x,y)R$. Let $z=x^2 + xy + y^2 \in \mathfrak{m}^2 \setminus \mathfrak{m}^3$. Then $z$ is a witness to $\Gll(R)$ and generates a minimal reduction of $\mathfrak{m}^2$. We have \\
\[
p+3=\Gll(R)=2 \left(r_{z}(\mathfrak{m}^2) + 1 \right)
\] \text{}\\ and
\[
r_{z}(\mathfrak{m}^2) = \dfrac{p+1}{2}.
\] 
\text{}\\\\
Therefore, $r_{w}(\mathfrak{m}^2)=\dfrac{p+1}{2}$ for every principal reduction $w$ of $\mathfrak{m}^2$.
\end{proposition}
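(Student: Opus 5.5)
Since the hypotheses of Proposition \ref{prop:2.8} are a special case of those of Proposition \ref{prop:2.6} (take $n=0$, $m=1$; when $m=1$ we only need $2$ to be a primitive root modulo $p$), Proposition \ref{prop:2.6} gives directly that $z=x^2+xy+y^2$ is a witness to $\Gll(R)$ and that $\Gll(R)=e(R)+1=p+3$, with $e(R)=\Ord_S(f)=p+2$. Two things remain: that $z$ generates a (minimal) reduction of $\mathfrak{m}^2$, and the exact value of $r_z(\mathfrak{m}^2)$.

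\textbf{Reduction and minimality.} Since $z$ is a witness to $\Gll(R)$, it is a nonzerodivisor by definition. We also have $\mathfrak{m}^{p+3}\subset zR$. I would show that $z\in\mathfrak{m}^2\setminus\mathfrak{m}^3$ is a reduction of $\mathfrak{m}^2$ by checking that $z^*$ is a homogeneous parameter in $\Gr_{\mathfrak{m}}(R)=\mathbb{F}_2[x,y]/(xy(x^p+y^p))$, and then use the equivalence $(2)\Leftrightarrow(3)$ from De Stefani's remark. Here $z^*$ is a parameter exactly when $x^2+xy+y^2$ shares no common factor with $xy(x^p+y^p)$. Now $x^2+xy+y^2$ is irreducible over $\mathbb{F}_2$; its roots are the primitive cube roots of unity, lying in $\mathbb{F}_4$. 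So a common factor would require $\omega^p=1$ for such a root $\omega$ (since $-1=1$ here), i.e.\ $3\mid p$, which is impossible for $p>3$. The same argument shows that $z^*$ is a nonzerodivisor, which is the regularity hypothesis of Propositions \ref{prop:1.1} and \ref{prop:1.3}. Minimality then follows from Lemma \ref{lemma:2.1.1}.

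\textbf{Reduction number.} Let $r=r_z(\mathfrak{m}^2)$. The general inequality $\Gll(R)\le d(r+1)$ gives $p+3\le 2(r+1)$, so $r\ge (p+1)/2$; note that $p$ is odd, so $(p+1)/2$ is an integer. For the upper bound I need $\mathfrak{m}^{p+3}=z\mathfrak{m}^{p+1}$. Since $z^*$ is $\Gr_{\mathfrak{m}}(R)$-regular, we have $zR\cap\mathfrak{m}^{j}=z\mathfrak{m}^{j-2}$ for all $j$ (the standard Valabrega–Valla consequence of regularity of the initial form). Applying this with $j=p+3$ to $\mathfrak{m}^{p+3}\subset zR$ gives $\mathfrak{m}^{p+3}\subset z\mathfrak{m}^{p+1}$; the reverse inclusion is trivial. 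Hence $(\mathfrak{m}^2)^{r'+1}=z(\mathfrak{m}^2)^{r'}$ for $r'=(p+1)/2$, so $r\le (p+1)/2$ and the displayed equalities follow. The final statement about every principal reduction $w$ is Lemma \ref{lemma:2.2.1}.

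The main obstacle is the step $zR\cap\mathfrak{m}^j=z\mathfrak{m}^{j-2}$. If I prefer not to cite it, I would prove it directly: if $za\in\mathfrak{m}^j$ with $\Ord(a)=t<j-2$, then $z^*a^*\neq 0$ in degree $t+2<j$, a contradiction. This only uses that $z^*$ is a nonzerodivisor, which is exactly where the hypothesis $3\mid p-1$ is consistent with the $\mathbb{F}_4$-root argument above.
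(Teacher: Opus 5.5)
Your argument is correct. It differs from the paper's at the key step $\mathfrak{m}^{p+3}=z\mathfrak{m}^{p+1}$.

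\textbf{The paper's route.} The paper proves this inclusion by direct computation. It writes down the generators $z_i=z\,x^{p+2-i}y^{i-1}$ of $z\mathfrak{m}^{p+1}$ and adds them in pairs $z_{3i-1}+z_{3i}$ and $z_{3i}+z_{3i+1}$ so that the sums telescope. It then uses $x^{p+1}y=xy^{p+1}$ to isolate $x^{p+3}$, $x^{p+2}y$ and $x^{p+1}y^2$. The hypothesis $3\mid p-1$ is there only so that the index ranges $(p+2)/3$ and $(p-1)/3$ are integers. The value $\Gll(R)=p+3$ from Proposition \ref{prop:2.6} is used only for the lower bound $p+3\le 2(r_z(\mathfrak{m}^2)+1)$.

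\textbf{Your route.} You use the witness property for the upper bound as well. You upgrade $\mathfrak{m}^{p+3}\subset zR$ to $\mathfrak{m}^{p+3}\subset z\mathfrak{m}^{p+1}$ via $zR\cap\mathfrak{m}^j=z\mathfrak{m}^{j-2}$, and your order argument derives this correctly from the regularity of $z^*$. You also verify that regularity directly through the $\mathbb{F}_4$ argument, instead of citing the proof of Proposition \ref{prop:2.6}.

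\textbf{What each approach buys.} Your route is shorter and less error-prone. The paper's displayed sums are in fact mis-indexed: for instance $z_{p+1}+z_{p+2}=x^3y^p+y^{p+3}$, not $x^4y^{p-1}+xy^{p+2}$, and the $v_i$ sum is $x^{p+1}y^2+x^2y^{p+1}$. The conclusion survives once these are corrected. Your route is also more general. For any witness $z$ of order $d$ with $\Gr_{\mathfrak{m}}(R)$-regular initial form, it gives $d(r_z(\mathfrak{m}^d)+1)=d\lceil \Gll(R)/d\rceil$. This yields Propositions \ref{prop:2.9} and \ref{proposition:2.13} immediately as well. What the paper's computation offers in return is an explicit generator-level certificate of the upper bound that does not rely on the witness property.

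\textbf{A correction to your closing remark.} The hypothesis $3\mid p-1$ plays no role anywhere in your proof. The $\mathbb{F}_4$ argument only needs $3\nmid p$, which holds for every prime $p>3$. So your argument proves the statement without that hypothesis.

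Combined with Propositions \ref{prop:1.1} and \ref{prop:1.2}, you could even drop the primitive-root hypothesis you needed to invoke Proposition \ref{prop:2.6}. Over $\mathbb{F}_2$ the only linear forms are $x$, $y$ and $x+y$, and each divides $xy(x^p+y^p)$ because $p$ is odd. Hence $\Gll(R)>e(R)=p+2$. Proposition \ref{prop:1.1} then gives $\Gll(R)=p+3$ and $\mathfrak{m}^{p+3}\subset zR$.
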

\text{}\\
\begin{proof}
By Proposition \ref{prop:2.6}, $z$ is a witness to $\text{g}\ell\ell(R)$ and $\text{g}\ell\ell(R)=p+3$. By the proof of Proposition \ref{prop:2.6}, the initial form $z^*$ is $\text{gr}_{\mathfrak{m}}(R)$-regular. Therefore, $z$ generates a reduction of $\mathfrak{m}^2$ and \\
\begin{equation*}\label{equation:1}
p+3=\text{g}\ell\ell(R) \leq 2 \left(r_{z}(\mathfrak{m}^2) + 1 \right).
\end{equation*}\text{}\\
We prove that $p+3=2\left(r_{z}(\mathfrak{m}^2) + 1 \right)$. To this end, we prove the inclusion $\mathfrak{m}^{p+3} \subseteq z \mathfrak{m}^{p+1}$. We have 
\[
\mathfrak{m}^{p+1}=\left(x^{p+1},\, x^p y, \, \dots, y^{p+1} \right)R.
\] \text{}\\ The $p+2$ generators $x^{p+1-i}y^i$ of $\mathfrak{m}^{p+1}$ give us the following $p+2$ generators $z_i$ of $z\mathfrak{m}^{p+1}$:\\
\begin{align*}
&z_1 = x^{p+3} + x^{p+2}y + x^{p+1}y^2&\\\\
&z_2 = x^{p+2}y + x^{p+1}y^2 + x^{p}y^3&\\\\
&\vdots& \\\\
&z_{p+2}=x^2 y^{p+1} +xy^{p+2} + y^{p+3}&
\end{align*} \text{}\\ and in general, 
\begin{align*}
&z_i=x^{p+4-i}y^{i-1} + x^{p+3-i}y^{i} + x^{p+2-i}y^{i+1}&
\end{align*}\text{}\\ 
for $1 \leq i \leq p+2$. Since $x^{p+1} y= xy^{p+1}$ in $R$, we have \\ 
\begin{equation}\label{equation:2.1}
x^{p+2}y=x^2y^{p+1}
\end{equation} and 
\begin{equation}\label{equation:2.2}
x^{p+1}y^2 = xy^{p+2}.
\end{equation} \text{}\\ Define the elements $w_1, \, w_2, \, \dots, \, w_{(p+2)/3} \in z\mathfrak{m}^{p+1}$ as follows: \\
\begin{align*}
&w_1 = z_2 + z_3 = x^{p+2}y + x^{p-1}y^4&\\\\
&w_2 = z_5 +z _6 = x^{p-1}y^4 + x^{p-4}y^{7}&\\\\ 
&\vdots& \\\\
&w_{(p-1)/3}=z_{p-2} + z_{p-1} = x^7 y^{p-4} + x^4 y^{p-1}&
\\\\
&w_{(p+2)/3}= z_{p+1} + z_{p+2} = x^4 y^{p-1} + x y^{p+2}&
\end{align*} \text{}\\ and in general, 
\begin{align*}
&w_i = z_{3i-1} + z_{3i}& 
\end{align*}
\text{}\\ for $1 \leq i \leq (p+2)/3.$
By equation \ref{equation:2.2}, we have \\
\begin{equation}\label{equation:2.4}
\sum\limits_{i=1}^{(p+2)/3} w_{i} = x^{p+2}y + xy^{p+2} = x^{p+2}y + x^{p+1}y^2 \in z \mathfrak{m}^{p+1}.
\end{equation} \text{}\\ By equation \ref{equation:2.4} and the definition of $z_1$, it follows that $x^{p+3} \in z \mathfrak{m}^{p+1}$. Likewise, define elements $v_1, v_2, \dots, v_{(p-1)/3}$ as follows: \\
\begin{align*}
&v_1 = z_3 + z_4 = x^{p+1}y^2 + x^{p-2}y^5&\\\\
&v_2 = z_6 +z _7 = x^{p-2}y^5 + x^{p-5}y^{8}&\\\\ 
&\vdots& \\\\
&v_{(p-4)/3}=z_{p-4} + z_{p-3} = x^9 y^{p-6} + x^6 y^{p-3}&
\\\\
&v_{(p-1)/3}= z_{p-1} + z_{p} = x^6 y^{p-3} + x^3y^{p}&
\end{align*} \text{}\\ and in general, 
\begin{align*}
&v_i = z_{3i} + z_{3i+1}&
\end{align*}\text{}\\
for $1 \leq i \leq (p-1)/3$. By equation \ref{equation:2.2}, we have \\
\begin{equation}\label{equation:2.3}
\sum\limits_{i=1}^{(p-1)/3} v_{i} = x^{p+1}y^2 + y^{p+3} = xy^{p+2} + y^3 \in z \mathfrak{m}^{p+1}.
\end{equation} \text{}\\ By equation \ref{equation:2.3} and the definition of $z_{p+3}$, we have $x^{2}y^{p+1} \in z \mathfrak{m}^{p+1}$. Since $x^{2}y^{p+1}=x^{p+2}y$ by equation \ref{equation:2.1}, we have $x^{p+2}y \in z \mathfrak{m}^{p+1}$. Since $x^{p+3} \in z \mathfrak{m}^{p+1}$, it follows from the definition of the elements $z_i$ that each generator $x^{p+3-i}y^i$ of $\mathfrak{m}^{p+3}$ is contained in $z \mathfrak{m}^{p+1}$. Therefore, $\mathfrak{m}^{p+3} \subseteq z \mathfrak{m}^{p+1}$ and since $\text{ord}_{R}(z)=2$, we have $\mathfrak{m}^{p+3}=z \mathfrak{m}^{p+1}$ and ${(\mathfrak{m}^2)}^{(p+3)/2}=z{(\mathfrak{m}^2)}^{(p+1)/2}$. Therefore, $r_{z}(\mathfrak{m}^2) \leq (p+1)/2$ and \\
\[
p+3 = \text{g}\ell\ell(R) \leq 2(r_{z}(\mathfrak{m}^2)+1) \leq 2\left(\dfrac{p+1}{2}+1\right)=p+3.
\]
\end{proof}
\text{}\\

\begin{proposition}\label{prop:2.9}
Let $p>3$ be a prime such that $3 \mid 2p-1$ and $2$ is a primitive root modulo $p$. Let \\
\[
R=\mathbb{F}_2[[x,y]] \bigg/ \left(xy \left(x^{2p} + y^{2p} \right) \right)
\] \text{}\\ and $\mathfrak{m}=(x,y)R$. Let $z=x^2 + xy + y^2 \in \mathfrak{m}^2 \setminus \mathfrak{m}^3$. Then $z$ is a witness to $\Gll(R)$ and generates a minimal reduction of $\mathfrak{m}^2$. We have \\
\[
2(p+2)=\Gll(R)+1=2 \left(r_{z}(\mathfrak{m}^2) + 1 \right)
\] \text{}\\ and
\[
r_{z}(\mathfrak{m}^2) = p+1.
\] \text{}\\\\
Therefore, $r_{w}(\mathfrak{m}^2)=p+1$ for every principal reduction $w$ of $\mathfrak{m}^2$.
\end{proposition}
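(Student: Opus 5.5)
We apply Proposition~\ref{prop:2.6} with $n=m=1$ and then squeeze $r_{z}(\mathfrak{m}^2)$ between two bounds. The lower bound comes from the generalized Loewy length; the upper bound comes from the regularity of $z^*$.

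\emph{Setup.} Proposition~\ref{prop:2.6} with $n=m=1$ applies because $2$ is a primitive root modulo $p$. It gives $e(R)=2p+2$ and $\Gll(R)=2p+3$, and it shows that $z=x^2+xy+y^2$ is a witness to $\Gll(R)$. As in the proof of Proposition~\ref{prop:2.8}, the proof of Proposition~\ref{prop:2.6} shows that the initial form $z^*$ is $\Gr_{\mathfrak{m}}(R)$-regular. Hence $z$ generates a reduction of $\mathfrak{m}^2$, and Lemma~\ref{lemma:2.1.1} shows this reduction is minimal.

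\emph{Lower bound.} Write $r=r_{z}(\mathfrak{m}^2)$. The inequality $\Gll(R)\le 2(r+1)$ gives $2p+3\le 2(r+1)$. Since $2(r+1)$ is even, this forces $2(r+1)\ge 2p+4$, that is, $r\ge p+1$. This is exactly where the parity of $\Gll(R)$ produces the defect $2(r+1)-\Gll(R)=1$.

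\emph{Upper bound.} We must show $(\mathfrak{m}^2)^{p+2}=z(\mathfrak{m}^2)^{p+1}$, i.e. $\mathfrak{m}^{2p+4}= z\mathfrak{m}^{2p+2}$.

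\begin{itemize}
\item Since $z$ is a witness, $\mathfrak{m}^{2p+3}\subseteq zR$. Multiplying by $\mathfrak{m}$ gives $\mathfrak{m}^{2p+4}\subseteq zR\cap \mathfrak{m}^{2p+4}$.
\item Regularity of $z^*$ gives $zR\cap\mathfrak{m}^{j+2}=z\mathfrak{m}^{j}$ for all $j$. Indeed, if $za\in\mathfrak{m}^{j+2}$ with $\Ord_R(a)=i<j$, then $z^*a^*\neq 0$ in degree $i+2<j+2$, which is a contradiction.
\item Taking $j=2p+2$ gives $\mathfrak{m}^{2p+4}\subseteq z\mathfrak{m}^{2p+2}$. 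The reverse inclusion holds because $\Ord_R(z)=2$.
\end{itemize}

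So $r\le p+1$, and therefore $r=p+1$ and $2(r+1)=2(p+2)=\Gll(R)+1$. The last sentence of the statement then follows from Lemma~\ref{lemma:2.2.1}.

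\emph{Backup route.} The step I expect to need care is confirming that the cited proof really yields $\Gr_{\mathfrak{m}}(R)$-regularity of $z^*$ for this ring. If that is in doubt, I would instead check directly that $\mathfrak{m}^{2p+3}\subseteq z\mathfrak{m}^{2p+1}$. This is the telescoping computation of Proposition~\ref{prop:2.8} with $p$ replaced by $q=2p$, which satisfies $3\mid q-1$. That computation uses only characteristic $2$ and the relation $x^{q+1}y=xy^{q+1}$. Multiplying the resulting inclusion by $\mathfrak{m}$ then gives the upper bound.
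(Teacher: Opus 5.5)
Your proof is correct, but it gets the key upper bound by a different route from the paper. The inputs and the lower bound are the same as the paper's: Proposition~\ref{prop:2.6} for $\Gll(R)=2p+3$, the witness property, regularity of $z^*$, and then parity.

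For $\mathfrak{m}^{2p+4}\subseteq z\mathfrak{m}^{2p+2}$, however, the paper computes by hand. It lists the generators $z_i$ of $z\mathfrak{m}^{2p+2}$ and forms the telescoping sums $\sum(z_{3i}+z_{3i+1})$ and $\sum(z_{3i-1}+z_{3i})$; this is the only place $3\mid 2p-1$ is used. It then rewrites these sums via $x^{2p+3}y=x^3y^{2p+1}$ and $x^{2p+1}y^3=xy^{2p+3}$ to extract $x^{2p+4}$ and $x^{2p+2}y^2$, and propagates along the $z_i$. You instead get the inclusion for free from the witness property and $zR\cap\mathfrak{m}^{j+2}=z\mathfrak{m}^j$. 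Your order argument for that identity is fine, since by Krull's intersection theorem every nonzero element has finite order.

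What your route buys: it never uses $3\mid 2p-1$. It also shows more generally that if $z$ is an order-$d$ witness with $z^*$ regular, then $\mathfrak{m}^n=z\mathfrak{m}^{n-d}$ for every $n\ge\Gll(R)$. Hence $d(r_z(\mathfrak{m}^d)+1)$ is simply the least multiple of $d$ that is at least $\Gll(R)$. The $0$ versus $1$ outcomes in Propositions~\ref{prop:2.8}, \ref{prop:2.9} and \ref{proposition:2.13} are then just the parity of $\Gll(R)$.

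What the paper's computation buys is independence from the witness property. It certifies $\mathfrak{m}^{2p+4}=z\mathfrak{m}^{2p+2}$ from the defining relation alone. This is the style of argument used in Proposition~\ref{prop:3.3}, where the reduction determines $\Gll(R)$ rather than the reverse.

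The step you flagged is not delicate. Since $f$ is homogeneous, $\Gr_{\mathfrak{m}}(R)\cong\mathbb{F}_2[x,y]/(xy(x^p+y^p)^2)$. The irreducible form $x^2+xy+y^2$ can share a factor with this polynomial only by dividing $x^p+y^p$. That forces $\omega^p=1$ for a primitive cube root of unity $\omega\in\mathbb{F}_4$, i.e.\ $3\mid p$, which is impossible.

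So your backup route is unnecessary. If you do use it, redo the telescoping rather than copying the displayed sums in the proof of Proposition~\ref{prop:2.8}, which do not come out as written. For example, $\sum_{i=1}^{(q+2)/3}(z_{3i-1}+z_{3i})$ actually telescopes to $x^{q+2}y+y^{q+3}$. The inclusion those sums aim at is nonetheless true.
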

\text{}\\
\begin{proof} 
By Proposition \ref{prop:2.6}, $z$ is a witness to $\text{g}\ell\ell(R)$ and $\text{g}\ell\ell(R)=2p+3$. By the proof of Proposition \ref{prop:2.6}, the initial form $z^*$ is $\text{gr}_{\mathfrak{m}}(R)$-regular \cite{Bartels24}[Proposition 3.12]. Therefore, $z$ generates a reduction of $\mathfrak{m}^2$ and \\
\begin{equation*}\label{equation:1}
2p+3=\text{g}\ell\ell(R) \leq 2 \left(r_{z}(\mathfrak{m}^2) + 1 \right).
\end{equation*}\text{}\\
Since $2p+3$ is odd, we have $2p+3<2 \left(r_{z}(\mathfrak{m}^2)+1 \right)$ and \\
\[
2p+4 \leq 2 \left(r_{z}(\mathfrak{m}^2) + 1\right).
\]\text{}\\
We prove that $2p+4=2\left(r_{z}(\mathfrak{m}^2) + 1 \right)$. To this end, we prove the inclusion $\mathfrak{m}^{2p+4} \subseteq z \mathfrak{m}^{2p+2}$. We have 
\[
\mathfrak{m}^{2p+2}=\left(x^{2p+2},\, x^ {2p+1}y, \, \dots, y^{2p+2} \right)R.
\] \text{}\\ The $2p+3$ generators $x^{2p+2-i}y^i$ of $\mathfrak{m}^{2p+2}$ give us the following $2p+3$ generators $z_i$ of $z\mathfrak{m}^{2p+2}$:\\
\begin{align*}
&z_1 = x^{2p+4} + x^{2p+3}y + x^{2p+2}y^2&\\\\
&z_2 = x^{2p+3}y + x^{2p+2}y^2 + x^{2p+1}y^3&\\\\
&\vdots& \\\\
&z_{2p+3}=x^2 y^{2p+2} +xy^{2p+3} + y^{2p+4}&
\end{align*} \text{}\\ and in general, 
\begin{align*}
&z_i=x^{2p+5-i}y^{i-1} + x^{2p+4-i}y^{i} + x^{2p+3-i}y^{i+1}&
\end{align*}\text{}\\ 
for $1 \leq i \leq 2p+3$. Since $x^{2p+1} y= xy^{2p+1}$ in $R$, we have \\ 
\begin{equation}\label{equation:2.5}
x^{2p+3}y=x^3y^{2p+1}
\end{equation} and 
\begin{equation}\label{equation:2.6}
x^{2p+1}y^3 = xy^{2p+3}.
\end{equation} \text{} Define the elements $w_1, \, w_2, \, \dots, \, w_{(2p-1)/3} \in z\mathfrak{m}^{2p+2}$ as follows:\text{}\\
\begin{align*}
&w_1 = z_3 + z_4 = x^{2p+2}y^2 + x^{2p-1}y^5&\\\\
&w_2 = z_6 +z _7 = x^{2p-1}y^5 + x^{2p-4}y^{8}&\\\\
&\vdots& \\\\
&w_{(2p-4)/3}=z_{2p-4} + z_{2p-3} = x^9 y^{2p-5} + x^6 y^{2p-2}&
\\\\
&w_{(2p-1)/3}= z_{2p-1} + z_{2p} = x^6 y^{2p-2} + x^3 y^{2p+1}&
\end{align*} \text{}\\ and in general, 
\begin{align*}
&w_i = z_{3i} + z_{3i+1}&
\end{align*}
\text{}\\ for $1 \leq i \leq (2p-1)/3$. By equation \ref{equation:2.5}, we have \\
\begin{equation}\label{equation:2.7}
\sum\limits_{i=1}^{(2p-1)/3} w_{i} = x^{2p+2}y^2 + x^3y^{2p+1} = x^{2p+2}y^2 + x^{2p+3}y \in z \mathfrak{m}^{2p+2}.
\end{equation} \text{}\\ By equation \ref{equation:2.7} and the definition of $z_1$, it follows that $x^{2p+4} \in z \mathfrak{m}^{2p+2}$. Likewise, define elements $v_1, v_2, \dots, v_{(2p+2)/3}$ as follows: \\
\begin{align*}
&v_1 = z_2 + z_3 = x^{2p+2}y^2 + x^{2p-1}y^5&\\\\
&v_2 = z_5 +z _6 = x^{2p-1}y^5 + x^{2p-4}y^{8}&\\\\ 
&\vdots& \\\\
&v_{(2p-1)/3}=z_{2p-2} + z_{2p-1} = x^7 y^{2p-1} + x^4 y^{2p}&
\\\\
&v_{(2p+2)/3}= z_{2p+1} + z_{2p+2} = x^4 y^{2p} + xy^{2p+3}&
\end{align*} \text{}\\ and in general, 
\begin{align*}
&v_i = z_{3i-1} + z_{3i}&
\end{align*}\text{}\\
for $1 \leq i \leq (2p+2)/3$. By equation \ref{equation:2.6}, we have \\
\begin{equation}\label{equation:2.8}
\sum\limits_{i=1}^{(2p+2)/3} v_{i} = x^{2p+3}y + xy^{2p+3} = x^{2p+3}y + x^{2p+1}y^3 \in z \mathfrak{m}^{2p+2}.
\end{equation} \text{}\\ By equation \ref{equation:2.8} and the definition of $z_{2}$, we have $x^{2p+2}y^{2} \in z \mathfrak{m}^{2p+2}$. It then follows from the definition of the elements $z_i$ that each generator $x^{2p+4-i}y^i$ of $\mathfrak{m}^{2p+4}$ is contained in $z \mathfrak{m}^{2p+2}$. Therefore, $\mathfrak{m}^{2p+4} \subseteq z \mathfrak{m}^{2p+2}$ and since $\text{ord}_{R}(z)=2$, we have $\mathfrak{m}^{2p+4}=z \mathfrak{m}^{2p+2}$ and ${(\mathfrak{m}^2)}^{p+2}=z{(\mathfrak{m}^2)}^{p+1}$. Therefore, $r_{z}(\mathfrak{m}^2) \leq p+1$ and \\\\
\[
2p+4 = \text{g}\ell\ell(R)+1 \leq 2(r_{z}(\mathfrak{m}^2)+1) \leq 2\left(p+2\right)=2p+4.
\]
\end{proof}
\text{}
\begin{remark}
In Proposition \ref{proposition:2.13} we show that, for each prime $p>2$, there is a one-dimensional hypersurface $(R,\mathfrak{m})$ over $\mathbb{F}_p$ such that $\Gll(R)-e(R)=1$ and $2\left(r_{z}(\mathfrak{m}^2) +1 \right)-\text{g}\ell\ell(R)=1$, where $z \in \mathfrak{m}^2$ is an order two witness to $\text{g}\ell\ell(R)$ that generates a reduction of $\mathfrak{m}^2$.
\end{remark}
\text{}\\
\begin{lemma}
Let $p$ be a prime integer. Define the polynomial $f(x,y) \in \mathbb{F}_{p}[x,y]$ by 
\[
f(x,y) = y \left(\prod\limits_{\alpha \in \mathbb{F}_p} (x+\alpha y) \right).
\] Then $f(x,y)=x^p y-xy^p$.
\end{lemma}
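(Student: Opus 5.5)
The plan is to reduce the identity to the classical factorization $t^p - t = \prod_{\alpha \in \mathbb{F}_p}(t-\alpha)$ in $\mathbb{F}_p[t]$, and then homogenize.

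First I would establish the one-variable identity. By Fermat's little theorem, every $\alpha \in \mathbb{F}_p$ satisfies $\alpha^p = \alpha$, so each $\alpha$ is a root of $g(t) = t^p - t$. Since $g$ is monic of degree $p$ and has the $p$ distinct roots $\alpha \in \mathbb{F}_p$, and $\mathbb{F}_p[t]$ is a UFD, we get $g(t)=\prod_{\alpha \in \mathbb{F}_p}(t-\alpha)$. The map $\alpha \mapsto -\alpha$ is a bijection of $\mathbb{F}_p$, so this can be rewritten as
\[
t^p - t = \prod_{\alpha \in \mathbb{F}_p}(t+\alpha).
\]

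Next I would homogenize. Both $P(x,y)=\prod_{\alpha \in \mathbb{F}_p}(x+\alpha y)$ and $x^p - xy^{p-1}$ are homogeneous of degree $p$ in $\mathbb{F}_p[x,y]$. Writing $P(x,y)=\sum_j c_j x^{p-j}y^j$, the dehomogenization $P(t,1)=\sum_j c_j t^{p-j}$ equals $t^p - t$ by the first step. Comparing coefficients then gives $P(x,y)=x^p - xy^{p-1}$. This is a formal identity of polynomials, valid because the coefficient map from homogeneous degree-$p$ forms to polynomials in $t$ of degree at most $p$ is injective. It involves no evaluation at points, so there is no issue over the finite field.

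Finally, multiplying by $y$ gives $f(x,y)=y\,P(x,y)=x^p y - x y^p$. The case $p=2$ needs no separate treatment: then $-1=1$, and the product is $x(x+y)$, so $f = x^2y + xy^2 = x^2y - xy^2$, consistent with the general argument.

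There is no serious obstacle. The only step needing care is the homogenization: the argument must compare coefficients rather than rely on evaluation, since a nonzero polynomial can vanish at every point of $\mathbb{F}_p$.
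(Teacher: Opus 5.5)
Your proposal is correct and follows essentially the same route as the paper: factor $t^p-t$ over $\mathbb{F}_p$ as the product of the linear factors $t+\alpha$, homogenize to get $\prod_{\alpha\in\mathbb{F}_p}(x+\alpha y)=x^p-xy^{p-1}$, and multiply by $y$. Your version is slightly more careful in two places: you make explicit the reindexing $\alpha\mapsto-\alpha$, which the paper uses silently when it passes from the divisors $x-\alpha$ to the product of the $x+\alpha$, and you justify the homogenization step $y^p g(x/y)$ by comparing coefficients.
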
\text{}\\
\begin{proof}
Let $g(x)=x^p-x \in \mathbb{F}_{p}[x]$. For every element $\alpha \in \mathbb{F}_{p}$, we have $g(\alpha)=0$. Therefore, $(x-\alpha) \mid g(x)$ for all $\alpha \in \mathbb{F}_{p}$. Since the linear polynomials $x-\alpha_1$ and $x-\alpha_2$ are coprime in $\mathbb{F}_p[x]$ for distinct elements $\alpha_1 \neq \alpha_2$ of $\mathbb{F}_p$, we have \\
\[
\left(\prod\limits_{\alpha \in \mathbb{F}_p} (x+\alpha ) \right) \Biggm| g(x)  
\] \text{}\\ and $g(x)=\prod\limits_{\alpha \in \mathbb{F}_p} (x+\alpha)$. Then \\\\
\[
x^p-xy^{p-1}=y^pg(x/y)=\prod\limits_{\alpha \in \mathbb{F}_p} (x+\alpha y) 
\] and 
\[
f(x,y)=x^p y- xy^p.
\]
\end{proof}
\text{}\\
\begin{remark}\label{remark:2.11}
Let $p>2$ be prime. By Euler's criterion, we have the following: \\
\begin{enumerate}
\item[(a)] There is a positive integer $n$ that is a quadratic non-residue modulo $p$.\\
\item[(b)] If $n$ is a quadratic non-residue modulo $p$, then $n^{(p-1)/2}=-1 \,\,\,\,\text{mod} \,\,p$.
\end{enumerate}
\end{remark}
\text{}\\
\begin{proposition}\label{proposition:2.13} 
Let $p>2$ be a prime. Let \\
\[
R=\mathbb{F}_{p}[[x,y]] \biggm/ \left(y \left(\prod\limits_{\alpha \in \mathbb{F}_p} (x+\alpha y) \right) \right) = \mathbb{F}_{p}[[x,y]] \biggm/\left(xy \left(x^{p-1} - y^{p-1} \right) \right)
\] \text{}\\\\ and \,$\mathfrak{m}=(x,y)R$. Let $n$ be a positive integer that is a quadratic non-residue modulo $p$ and $z=x^2-ny^2 \in \mathfrak{m}^2 \setminus \mathfrak{m}^3$. Then $z$ is a witness to $\Gll(R)$ and generates a minimal reduction of $\mathfrak{m}^2$. We have \\
\[
p+3=\Gll(R)+1=2\left(r_{z}(\mathfrak{m}^2) + 1 \right)
\] and \text{}\\ 
\[
r_{z}(\mathfrak{m}^2)=\dfrac{p+1}{2}.
\] \text{}\\\\
Therefore, $r_{w}(\mathfrak{m}^2)=\dfrac{p+1}{2}$ for every principal reduction $w$ of $\mathfrak{m}^2$.
\end{proposition}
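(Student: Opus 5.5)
The plan is to pin down $\Gll(R)$ exactly, verify that $z$ is a witness whose initial form is $\Gr_{\mathfrak{m}}(R)$-regular, and then squeeze $r_{z}(\mathfrak{m}^2)$ between a parity lower bound and a Hilbert-function upper bound.

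\textbf{Step 1: the multiplicity.} Let $f=y\prod_{\alpha\in\mathbb{F}_p}(x+\alpha y)$. Since $f$ is a homogeneous form of degree $p+1$, we have $e(R)=\Ord_S(f)=p+1$ by (\ref{equation:1.1}). Moreover $\Gr_{\mathfrak{m}}(R)\cong \mathbb{F}_p[x,y]/(f)$.

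\textbf{Step 2: the generalized Loewy length.} Every nonzero linear form $\alpha x+\beta y$ over $\mathbb{F}_p$ is, up to a unit, either $y$ or some $x+\alpha y$. Each of these divides $f$, so it is a zerodivisor in $R$. Hence Proposition \ref{prop:1.2} gives $\Gll(R)\ge p+2$.

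Next, $z^*=x^2-ny^2$ is irreducible over $\mathbb{F}_p$, because $n$ is a non-residue (Remark \ref{remark:2.11}). It therefore shares no factor with the product of linear forms $f$, so $z^*$ is regular on $\mathbb{F}_p[x,y]/(f)=\Gr_{\mathfrak{m}}(R)$. Proposition \ref{prop:1.1} with $d=2$ then yields $\Gll(R)\le e(R)+1=p+2$, so $\Gll(R)=p+2=e(R)+1$.

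\textbf{Step 3: $z$ is a witness and a minimal reduction.} By Proposition \ref{prop:1.1}, $z$ is a witness provided $\mathfrak{m}^{p+2}$ is not principal. Since $\Gr_{\mathfrak{m}}(R)$ is a hypersurface of degree $p+1$, the number of minimal generators of $\mathfrak{m}^{j}$ is $\min(j+1,p+1)=p+1>1$ for $j=p+2$. Regularity of $z^*$ makes $z$ a reduction of $\mathfrak{m}^2$ (De Stefani's equivalence recalled in the introduction). It is then a minimal reduction by Lemma \ref{lemma:2.1.1}.

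\textbf{Step 4: the lower bound on $r_{z}(\mathfrak{m}^2)$.} From $\Gll(R)\le 2(r_z(\mathfrak{m}^2)+1)$ and the fact that $p+2$ is odd, we get $2(r_z(\mathfrak{m}^2)+1)\ge p+3$. Hence $r_z(\mathfrak{m}^2)\ge (p+1)/2$.

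\textbf{Step 5: the upper bound $\mathfrak{m}^{p+3}=z\mathfrak{m}^{p+1}$.} This is the step I expect to be the main obstacle, and I would handle it via the associated graded ring rather than by explicit manipulation of generators as in Propositions \ref{prop:2.8} and \ref{prop:2.9}.

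First, $\Gr_{\mathfrak{m}}(R)/(z^*)=\mathbb{F}_p[x,y]/(f,z^*)$ is a complete intersection of degrees $2$ and $p+1$. Its Hilbert series is $(1+t)(1+t+\dots+t^{p})$, so it vanishes in all degrees $\ge p+2$.

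Second, this vanishing means $\mathfrak{m}^j\subseteq z\mathfrak{m}^{j-2}+\mathfrak{m}^{j+1}$ for $j\ge p+2$. Iterating gives $\mathfrak{m}^{j}\subseteq z\mathfrak{m}^{j-2}+\mathfrak{m}^{j+N}$ for every $N$.

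Third, I would remove the $\mathfrak{m}^{j+N}$ term. Applying the inclusion to $\mathfrak{m}^{j+N}$ itself rewrites it as $z\mathfrak{m}^{j+N-2}+\mathfrak{m}^{j+2N}$, and $z\mathfrak{m}^{j+N-2}\subseteq z\mathfrak{m}^{j-2}$. So the inclusion is stable in $N$, and Krull's intersection theorem applied to $R/z\mathfrak{m}^{j-2}$ gives $\mathfrak{m}^j=z\mathfrak{m}^{j-2}$. (Regularity of $z^*$ ensures $zR\cap\mathfrak{m}^j=z\mathfrak{m}^{j-2}$ if needed.)

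Taking $j=p+3$ gives $(\mathfrak{m}^2)^{(p+3)/2}=z(\mathfrak{m}^2)^{(p+1)/2}$, so $r_z(\mathfrak{m}^2)\le (p+1)/2$.

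\textbf{Step 6: conclusion.} Combining Steps 4 and 5 gives $r_z(\mathfrak{m}^2)=(p+1)/2$ and $p+3=\Gll(R)+1=2(r_z(\mathfrak{m}^2)+1)$. Finally, Lemma \ref{lemma:2.2.1} transfers the value $(p+1)/2$ to every principal reduction $w$ of $\mathfrak{m}^2$.
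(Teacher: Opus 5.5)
Your proof is correct. Steps 1--4 and 6 essentially reproduce what the paper imports from Propositions \ref{prop:1.1}, \ref{prop:1.2} and the proof of Proposition 3.5 of the earlier Bartels paper. Step 5, however, takes a genuinely different route.

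The paper proves $\mathfrak{m}^{p+3}\subseteq z\mathfrak{m}^{p+1}$ by hand. It forms the weighted telescoping sums $\sum_{i=1}^{(p-1)/2}n^{i-1}z_{2i}$ and $\sum_{i=1}^{(p-1)/2}n^{i-1}z_{2i+1}$ of the generators $z_i=x^{p+4-i}y^{i-1}-nx^{p+2-i}y^{i+1}$. It then uses $x^{p+2}y=x^3y^p$ in $R$ and Euler's criterion $n^{(p-1)/2}\equiv -1 \pmod p$ to get $2x^{p+2}y,\,2x^{p+1}y^2\in z\mathfrak{m}^{p+1}$. Since $p$ is odd, the remaining degree-$(p+3)$ monomials then fall out of the $z_i$.

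You instead note that $\mathbb{F}_p[x,y]/(f,z^*)$ is a complete intersection with top nonzero degree $p+1$. Hence $\mathfrak{m}^j=z\mathfrak{m}^{j-2}+\mathfrak{m}^{j+1}$ for $j\ge p+2$, and Nakayama gives $\mathfrak{m}^j=z\mathfrak{m}^{j-2}$; your Krull-intersection argument amounts to the same thing.

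The paper's computation gives an explicit, elementary membership certificate that does not depend on the structure of $\Gr_{\mathfrak{m}}(R)$. Yours uses the arithmetic hypothesis only to make $z^*$ irreducible, and it is uniform. For any witness $z$ with $z^*$ regular of degree $d$ on a hypersurface of multiplicity $e$, it yields $\mathfrak{m}^j=z\mathfrak{m}^{j-d}$ for all $j\ge e+d-1$. Combined with Proposition \ref{prop:1.3}, this gives $r_z(\mathfrak{m}^d)=\lceil (e-1)/d\rceil$. That recovers Propositions \ref{prop:2.3}, \ref{prop:2.8}, \ref{prop:2.9} and \ref{prop:3.3} at once. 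It also explains why, for $d=2$, the difference $2(r_z(\mathfrak{m}^2)+1)-\Gll(R)$ is $0$ or $1$ according as $e$ is odd or even.

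Incidentally, the case $j=p+2$ of your Step 5 already gives $\mathfrak{m}^{p+2}=z\mathfrak{m}^{p}\subseteq zR$. So the non-principality count in Step 3 is unnecessary.
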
 \text{}\\
\begin{proof}
Since $g(x)=x^2-n$ is a degree two irreducible polynomial over $\mathbb{F}_p$, it follows from Propositions \ref{prop:1.1} and \ref{prop:1.2} and the proof of \cite{Bartels24}[Proposition 3.5] that $z=x^2-ny^2 \in R$ is a witness to $\text{g}\ell\ell(R)$ and $z^*$ is $\text{gr}_{\mathfrak{m}}(R)$-regular. Therefore, $z$ generates a reduction of $\mathfrak{m}^2$ and by \cite{Bartels24}[Proposition 3.5] we have \\
\[
p+2=\text{g}\ell\ell(R) \leq 2 \left(r_{z}(\mathfrak{m}^2) + 1 \right).
\]\text{}\\ Since $p+2$ is odd, we have \\
\[
p+3=\text{g}\ell\ell(R) + 1 \leq 2 \left(r_{z}(\mathfrak{m}^2) + 1 \right).
\] \text{}\\ We show that $\mathfrak{m}^{p+3} \subseteq z \mathfrak{m}^{p+1}$. From the $p+2$ generators $x^{p+1-i}y^i$ of $\mathfrak{m}^{p+1}$, we obtain the following $p+2$ generators $z_i$ of $z \mathfrak{m}^{p+1}$: \text{}\\
\begin{align*}
&z_1 = x^{p+3} -nx^{p+1}y^2&\\\\
&z_2 = x^{p+2}y - nx^{p}y^3&\\\\
&z_3 = x^{p+1}y^2 - nx^{p-1}y^4&\\\\
&\vdots& \\\\
&z_{p+2}=x^2 y^{p+1}-ny^{p+3}&
\end{align*} \text{}\\ and in general, 
\begin{align*}
&z_i=x^{p+4-i}y^{i-1} - n x^{p+2-i}y^{i+1}&
\end{align*}\text{}\\ 
for $1 \leq i \leq p+2$. Define $w \in z \mathfrak{m}^{p+1}$ by \\ 
\[
w=\sum\limits_{i=1}^{(p-1)/2} n^{i-1} z_{2i}.
\] \text{}\\ Then we have 
\[
w= x^{p+2}y -n^{(p-1)/2}x^3 y^p.
\] \text{}\\ Since $x^p y = x y^p$ in $R$, we also have $x^{p+2}y=x^3 y^p$ in $R$. Therefore, \\\\
\[
w=\left(1-n^{(p-1)/2} \right)x^{p+2}y=2 x^{p+2}y
\] \text{}\\ by Remark \ref{remark:2.11} and $x^{p+2}y \in z \mathfrak{m}^{p+1}$. Likewise, define $v \in z \mathfrak{m}^{p+1}$ by \\
\[
v=\sum\limits_{i=1}^{(p-1)/2} n^{i-1}z_{2i+1}.
\] Then 
\[
v = x^{p+1} y^2 - n^{(p-1)/2} x^2 y^{p+1} = x^{p+1} y^2 - n^{(p-1)/2} x^{p+1}y^2 
\] \text{}\\
\[
=\left(1-n^{(p-1)/2} \right) x^{p+1}y^2=2 x^{p+1}y^2 \in z \mathfrak{m}^{p+1}
\] \text{}\\\\ and $x^{p+1}y^2 \in z \mathfrak{m}^{p+1}$. Since we also have $x^{p+2}y \in z \mathfrak{m}^{p+1}$, it follows from the definition of the elements $z_i$ that $\left\{x^{p+3-i}y^i \right\}_{i=1}^{p+4} \subset z\mathfrak{m}^{p+3}$. Therefore, $\mathfrak{m}^{p+3}=z \mathfrak{m}^{p+1}$. So $\left( \mathfrak{m}^2\right)^{(p+3)/2}=z \left(\mathfrak{m}^2 \right)^{(p+1)/2}$ and $r_{z}(\mathfrak{m}^2) \leq \dfrac{p+1}{2}$. We have \\
\[
p+3=\text{g}\ell\ell(R)+1 \leq 2 \left(r_{z}(\mathfrak{m}^2) + 1 \right) \leq 2 \left(\dfrac{p+1}{2} + 1 \right)=p+3.
\] \end{proof}
\text{}
\begin{remark}
By Corollary \ref{corollary:2.6} and Propositions \ref{prop:2.8}, \ref{prop:2.9}, and \ref{proposition:2.13}, we obtain the following theorem.
\end{remark}\text{}\\
\begin{theorem}\label{theorem:2.16}
For every prime $p$, we have the following:\\
\begin{enumerate}
\item[(a)] There exists a one-dimensional hypersurface $(R,\mathfrak{m})$ over $\mathbb{F}_p$ with an order $d \geq 1$ witness $z$ to $\Gll(R)$ such that 
\[
d(r_{z}(\mathfrak{m}^d)+1)-\Gll(R)=0.
\]
\\
\item[(b)] There exists a one-dimensional hypersurface $(R,\mathfrak{m})$ over $\mathbb{F}_p$ with an order $d \geq 1$ witness $z$ to $\Gll(R)$ such that 
\[
d(r_{z}(\mathfrak{m}^d)+1)-\Gll(R)=1.
\]
\end{enumerate}
\end{theorem}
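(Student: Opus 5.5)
The plan is to split on whether $p=2$ or $p>2$, and in each case to exhibit an explicit member of one of the families already treated in Corollary \ref{corollary:2.6} and Propositions \ref{prop:2.8}, \ref{prop:2.9}, \ref{proposition:2.13}. Each of these results gives a witness $z$ to $\Gll(R)$ of a definite order $d$, together with the exact value of $d(r_{z}(\mathfrak{m}^d)+1)-\Gll(R)$. So the only real work is to check that the arithmetic hypotheses can be met for the prime in question.

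For $p>2$ I would argue as follows. For part (a), take Corollary \ref{corollary:2.6} with $k=\mathbb{F}_p$ and $n=0$, so $R=\mathbb{F}_p[[x,y]]/(xy(x+y))$ and $z=x+2y$ has order $d=1$. The corollary gives $\Gll(R)=r_{z}(\mathfrak{m})+1=p^0+2$, hence $1\cdot(r_{z}(\mathfrak{m})+1)-\Gll(R)=0$. For part (b), take Proposition \ref{proposition:2.13}. By Remark \ref{remark:2.11}(a) there is a quadratic non-residue $n$ modulo $p$, and then $z=x^2-ny^2$ is an order $d=2$ witness with $2(r_{z}(\mathfrak{m}^2)+1)=\Gll(R)+1$, so the difference is $1$.

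For $p=2$, the hypersurfaces are the ones over $\mathbb{F}_2$ in Propositions \ref{prop:2.8} and \ref{prop:2.9}. Both require an auxiliary prime $q>3$ for which $2$ is a primitive root modulo $q$, plus a congruence condition; the hypothesis of Proposition \ref{prop:2.6} is then satisfied because $m=1$, so primitivity modulo $q$ alone suffices.
\begin{itemize}
\item For part (a) I would choose $q=13$. Here $3\mid 12=q-1$, and $2$ has order $12$ modulo $13$ (since $2^4=3$, $2^6=12\neq 1$, $2^{12}=1$), so $2$ is a primitive root. Proposition \ref{prop:2.8} then gives an order $2$ witness with $2(r_{z}(\mathfrak{m}^2)+1)=\Gll(R)=16$, and the difference is $0$.
\item For part (b) I would choose $q=5$. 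Here $3\mid 2q-1=9$, and $2$ has order $4$ modulo $5$, so it is a primitive root. Proposition \ref{prop:2.9} then gives $2(r_{z}(\mathfrak{m}^2)+1)=\Gll(R)+1=14$, and the difference is $1$.
\end{itemize}

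There is no substantive obstacle, since all the ring-theoretic content is contained in the earlier propositions. The one point needing care is the case $p=2$. Corollary \ref{corollary:2.6} and Proposition \ref{proposition:2.13} both require odd characteristic, so they are unavailable there. That is why one must produce concrete auxiliary primes satisfying the primitive-root and divisibility hypotheses, which amounts to the small order computations modulo $13$ and modulo $5$ above.
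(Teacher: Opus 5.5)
Your proposal is correct and is essentially the paper's argument: the paper derives the theorem directly from Corollary \ref{corollary:2.6} and Propositions \ref{prop:2.8}, \ref{prop:2.9}, \ref{proposition:2.13}, splitting into $p>2$ and $p=2$ as you do. Your explicit auxiliary primes $q=13$ and $q=5$ also verify that the hypotheses of Propositions \ref{prop:2.8} and \ref{prop:2.9} can actually be met in characteristic $2$, which the paper leaves implicit.
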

\begin{center}
\section{Obtaining the generalized Loewy length from a reduction}
\end{center} \text{}
In the previous section, we used our knowledge of $\Gll(R)$ for the families of hypersurfaces $\left\{(R,\mathfrak{m}) \right\}$ from \cite{Bartels24}[Section 3] to find the reduction number of $\mathfrak{m}^d$ with respect to witnesses to $\text{g}\ell\ell(R)$. In this section, we show that $z=x^2+xy+y^2$ is a minimal reduction of $\mathfrak{m}^2$ for an additional family of hypersurfaces $\left\{(R,\mathfrak{m}) \right\}$. We use Proposition \ref{prop:1.2} and the inequalities\\
\[
e(R)<\text{g}\ell\ell(R) \leq 2 \left( r_{z}(\mathfrak{m}^2) + 1\right)
\] \text{}\\ to show that 
\[
2(2^{n-1}+1)=\text{g}\ell\ell(R)=2 \left(r_{z}(\mathfrak{m}^2) + 1 \right)
\] \text{}\\ and that $z$ is a witness to $\Gll(R)$ for all hypersurfaces in this family. By \cite{De Stefani16}[Remark 2.3], the initial form $z^*$ is $\Gr_{\mathfrak{m}}(R)$-regular, so we can also find $\text{g}\ell\ell(R)$ and prove that $z$ is a witness to $\text{g}\ell\ell(R)$ with Propositions \ref{prop:1.1} and \ref{prop:1.2}.
\text{}\\\\
\begin{lemma}\label{lemma:3.1}
Let $n \geq 1$. Then \\
\begin{enumerate}
\item[(a)] The binomial coefficient $\binom{2^n}{i}$ is even for all $0 < i< 2^n$.\\
\item[(b)] The binomial coefficient $\binom{2^n-1}{i}$ is odd for all $0 \leq i \leq 2^n -1$.
\end{enumerate}
\end{lemma}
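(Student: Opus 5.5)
The plan is to work in $\mathbb{F}_2[x]$ and compare coefficients. This turns both statements into polynomial identities, so no $2$-adic valuation computations are needed.

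\textbf{Part (a).} We induct on $n$ using the Frobenius identity $(1+x)^2 = 1+x^2$ in $\mathbb{F}_2[x]$. The base case $n=1$ is immediate, since $\binom{2}{1}=2$. For the inductive step, suppose $(1+x)^{2^{n-1}} = 1 + x^{2^{n-1}}$ in $\mathbb{F}_2[x]$. Squaring gives
\[
(1+x)^{2^n} = \left(1+x^{2^{n-1}}\right)^2 = 1 + x^{2^n}.
\]
Expanding the left side by the binomial theorem, the coefficient of $x^i$ is $\binom{2^n}{i} \bmod 2$. Comparing coefficients, $\binom{2^n}{i}\equiv 0 \pmod 2$ for every $0<i<2^n$.

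\textbf{Part (b).} Again we work in $\mathbb{F}_2[x]$. By part (a),
\[
(1+x)^{2^n} = 1+x^{2^n} = (1+x)\sum_{i=0}^{2^n-1} x^i,
\]
where the last equality holds because $-1=1$ in characteristic $2$. Since $\mathbb{F}_2[x]$ is an integral domain, we may cancel the factor $1+x$ to get
\[
(1+x)^{2^n-1} = \sum_{i=0}^{2^n-1} x^i.
\]
The left side expands as $\sum_{i} \binom{2^n-1}{i} x^i \bmod 2$, so comparing coefficients shows $\binom{2^n-1}{i}$ is odd for all $0\le i\le 2^n-1$.

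There is an alternative route for (b) that avoids polynomials. Pascal's rule gives $\binom{2^n-1}{i}+\binom{2^n-1}{i-1}=\binom{2^n}{i}$, which is even for $0<i<2^n$ by (a). So consecutive coefficients $\binom{2^n-1}{i}$ and $\binom{2^n-1}{i-1}$ have the same parity. Starting from $\binom{2^n-1}{0}=1$, induction on $i$ shows every coefficient is odd. Either route is routine. The only point needing care is the cancellation step in the polynomial route, which is justified because $\mathbb{F}_2[x]$ has no zero divisors.
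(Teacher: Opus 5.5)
Your proof is correct. Part (a) is the paper's argument specialized to one variable. The paper proves $(f+g)^{2^n}=f^{2^n}+g^{2^n}$ in $\mathbb{F}_2[x,y]$ by induction on $n$ and reads off the coefficients of $(x+y)^{2^n}$. The \emph{alternative route} you sketch for (b) is exactly the paper's proof of (b): induction on $i$ starting from $\binom{2^n-1}{0}=1$, using Pascal's rule and the evenness from (a).

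Your primary route for (b) is genuinely different. You cancel the nonzero factor $1+x$ in the domain $\mathbb{F}_2[x]$ from $(1+x)^{2^n}=1+x^{2^n}=(1+x)\sum_{i=0}^{2^n-1}x^i$. This replaces the inner induction on $i$ with a single divisibility computation and yields the identity $(1+x)^{2^n-1}=\sum_{i=0}^{2^n-1}x^i$ directly. Homogenized, or done verbatim in $\mathbb{F}_2[x,y]$ with $x+y$, this becomes $(x+y)^{2^n-1}=\sum_{i=0}^{2^n-1}x^{2^n-1-i}y^i$. That identity is the only thing the paper extracts from the lemma, in the proof of Proposition~\ref{prop:3.3}.

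So your route reaches the needed statement more directly. The paper's Pascal-rule induction instead stays purely arithmetic and never needs the absence of zero divisors. Either argument is complete.
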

\text{}\\
\begin{proof}
Let $R=\mathbb{F}_2[x,y]$. We show that, for any two polynomials $f, g \in R$ and any positive integer $n \geq 1$, we have 
\[
(f+g)^{2^n}=(f^{2^n}+g^{2^n}).
\] \text{}\\ This identity is clearly true for $n=1$. Suppose it is true for some $n \geq 1$. Then \\\\
\[
(f+g)^{2^{n+1}}={\left((f+g)^{2} \right)}^{2^n} = {(f^2 + g^2)}^{2^n}=f^{2^{n+1}}+g^{2^{n+1}}.
\] \text{}\\\\ Therefore,
\[
x^{2^n} + y^{2^n}=(x+y)^{2^n}=\sum\limits_{i=0}^{2^n} \binom{2^n}{i}x^{2^n-i}y^i
\] \text{}\\\\ for all $n \geq 1$ and $\binom{2^n}{i}=0 \,\, \text{mod} \,\, 2$ for all $0<i<2^n$. Fix $n \geq 1$. To prove part (b) we proceed by induction on $0 \leq i \leq 2^n-1$. For $i=0$, we have $\binom{2^n-1}{0}=1$. Suppose for some $0 \leq i <2^n-1$ the coefficient $\binom{2^n-1}{i}$ is odd. We have \\
\begin{equation}\label{equation:3.1}
\binom{2^n}{i+1} = \binom{2^n-1}{i} + \binom{2^n-1}{i+1}.
\end{equation} \text{}\\ Since $\binom{2^n}{i+1}$ is even by part (a) and $\binom{2^n-1}{i}$ is odd by the induction hypothesis, it follows from equation \ref{equation:3.1} that $\binom{2^n-1}{i+1}$ is odd.
\end{proof}
\text{}
\begin{remark}
By the argument in the proof of \cite{HS06}[Example 8.3.2], the maximal ideal $\mathfrak{m}$ of the ring $(R,\mathfrak{m})$ in Proposition \ref{prop:3.3} is basic. We show that $x^2+xy+y^2 \in R$ is a witness to $\Gll(R)$ that generates a minimal reduction of $\mathfrak{m}^2$. For $n=1$, this follows from \cite{HS97}[Example 3.2 and p.165] and \cite{HS06}[Example 8.3.2].
\end{remark}
\text{}\\
\begin{proposition}\label{prop:3.3}
Let $n \geq 1$. Let \\
\[
R=\mathbb{F}_2[[x,y]] \biggm/ \left( xy(x+y)^{2^n -1}\right)
\] \text{}\\ and $\mathfrak{m}=(x,y)R$. Let $z=x^2+xy+y^2 \in \mathfrak{m}^2 \setminus \mathfrak{m}^3$. Then $z$ is a witness to $\Gll(R)$ and generates a minimal reduction of $\mathfrak{m}^2$. We have\\
\[
2 \left(2^{n-1} +1 \right)=\Gll(R)=2 \left(r_{z}(\mathfrak{m}^2) + 1 \right).
\] \text{}\\ and 
\[
r_{z}(\mathfrak{m}^2)=2^{n-1}.
\] \text{}\\\\
Therefore, $r_{w}(\mathfrak{m}^2)=2^{n-1}$ for every principal reduction $w$ of $\mathfrak{m}^2$.
\end{proposition}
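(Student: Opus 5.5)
The plan is to work in the associated graded ring. I first establish regularity of $z^*$ and the value of $\Gll(R)$ from Propositions \ref{prop:1.1} and \ref{prop:1.2}. Then I prove the single equality $\mathfrak{m}^{2^n+2}=z\mathfrak{m}^{2^n}$ by a dimension count in $\Gr_{\mathfrak{m}}(R)$, and conclude with the inequality $\Gll(R)\le 2(r_z(\mathfrak{m}^2)+1)$.

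\textbf{Step 1 (regularity and $\Gll(R)$).} Put $f=xy(x+y)^{2^n-1}$. Since $f$ is homogeneous of degree $2^n+1$, we have $\Gr_{\mathfrak{m}}(R)\cong \mathbb{F}_2[x,y]/(f)$ and $e(R)=2^n+1$. The form $z^*=x^2+xy+y^2$ is irreducible over $\mathbb{F}_2$ and shares no factor with $f$, whose irreducible factors are $x$, $y$ and $x+y$. Hence $z^*$ is $\Gr_{\mathfrak{m}}(R)$-regular, so $z$ is a nonzerodivisor generating a reduction of $\mathfrak{m}^2$, and it is a minimal reduction by Lemma \ref{lemma:2.1.1}. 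The only nonzero linear forms over $\mathbb{F}_2$ are $x$, $y$ and $x+y$, and each divides $f$, so each is a zerodivisor in $R$. Proposition \ref{prop:1.2} then gives $e(R)<\Gll(R)$, while Proposition \ref{prop:1.1} with $d=2$ gives $\Gll(R)\le e(R)+1$. Therefore
\[
\Gll(R)=2^n+2=2(2^{n-1}+1).
\]

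\textbf{Step 2 ($\mathfrak{m}^{2^n+2}=z\mathfrak{m}^{2^n}$).} In $G=\mathbb{F}_2[x,y]/(f)$, multiplication by $z^*$ is an injective map $G_{2^n}\to G_{2^n+2}$. Since $(f)_{2^n}=0$, we have $\dim G_{2^n}=2^n+1$. Also $\dim G_{2^n+2}=(2^n+3)-2=2^n+1$, because $(f)_{2^n+2}=f\cdot\mathbb{F}_2[x,y]_1$ is $2$-dimensional. Thus the map is bijective. The same count, $\dim G_j=2^n+1$ for all $j\ge 2^n$, shows that $z^*G_j=G_{j+2}$ for every $j\ge 2^n$. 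Lifting this to $R$ gives $\mathfrak{m}^{j+2}\subseteq z\mathfrak{m}^{j}+\mathfrak{m}^{j+3}$ for all $j\ge 2^n$. Iterating,
\[
\mathfrak{m}^{2^n+2}\subseteq z\mathfrak{m}^{2^n}+\mathfrak{m}^N \quad\text{for all } N.
\]
By the Krull intersection theorem applied to $R/z\mathfrak{m}^{2^n}$, this yields $\mathfrak{m}^{2^n+2}\subseteq z\mathfrak{m}^{2^n}$. The reverse inclusion is clear. As an alternative, one could compute directly with the generators $z_i$ as in Section 2. In that computation, Lemma \ref{lemma:3.1}(b) shows that the defining relation reads $\sum_{a+b=2^n+1,\ a,b\ge1}x^ay^b=0$, and one would use telescoping sums of the $z_i$.

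\textbf{Step 3 (conclusion).} The equality $(\mathfrak{m}^2)^{2^{n-1}+1}=z(\mathfrak{m}^2)^{2^{n-1}}$ gives $r_z(\mathfrak{m}^2)\le 2^{n-1}$. It also gives $\mathfrak{m}^{\Gll(R)}\subseteq zR$, so $z$ is a witness to $\Gll(R)$. Combining with Step 1,
\[
2^n+2=\Gll(R)\le 2(r_z(\mathfrak{m}^2)+1)\le 2^n+2,
\]
so equality holds throughout and $r_z(\mathfrak{m}^2)=2^{n-1}$. Lemma \ref{lemma:2.2.1} extends this value to every principal reduction $w$ of $\mathfrak{m}^2$.

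The main point needing care is the passage from surjectivity in $\Gr_{\mathfrak{m}}(R)$ to the equality of ideals in $R$ in Step 2. I expect the Krull intersection argument to settle it, since $R$ is complete and $z\mathfrak{m}^{2^n}$ is closed in the $\mathfrak{m}$-adic topology.
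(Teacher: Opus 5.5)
Your proof is correct, and its key step differs genuinely from the paper's. The paper proves $\mathfrak{m}^{2^n+2}=z\mathfrak{m}^{2^n}$ by hand. Using Lemma \ref{lemma:3.1}(b), it writes the defining relation as $\sum_{i=0}^{2^n-1}x^{2^n-i}y^{i+1}=0$. It then adds up every third generator $z_i$, with a case split on whether $2^n\equiv 1$ or $2^n\equiv 2 \pmod 3$, until every monomial of degree $2^n+2$ lies in $z\mathfrak{m}^{2^n}$. Finally it reads off $\Gll(R)$ and $r_z(\mathfrak{m}^2)$ together from the single chain $e(R)+1\le\Gll(R)\le 2(r_z(\mathfrak{m}^2)+1)\le 2^n+2$. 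This uses only Proposition \ref{prop:1.2}, so the paper never needs regularity of $z^*$ or Proposition \ref{prop:1.1}.

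You instead use that the Hilbert function of $\Gr_{\mathfrak{m}}(R)=\mathbb{F}_2[x,y]/(f)$ is constant, equal to $e(R)$, from degree $e(R)-1$ on. So multiplication by the regular form $z^*$ is an isomorphism $G_j\to G_{j+2}$ in that range, and you lift this to $R$. This removes the mod $3$ bookkeeping and works uniformly. For any homogeneous $f$ of degree $e$ and any form $z^*$ of degree $d$ coprime to $f$, the same argument gives $\mathfrak{m}^{j+d}=z\mathfrak{m}^j$ for all $j\ge e-1$, hence $r_z(\mathfrak{m}^d)\le\lceil (e-1)/d\rceil$. This recovers the upper bounds in Propositions \ref{prop:2.8}, \ref{prop:2.9} and \ref{proposition:2.13} without their telescoping sums. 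The same count also gives the matching lower bound without knowing $\Gll(R)$: $\mathfrak{m}^{j+d}=z\mathfrak{m}^j$ forces $\dim G_{j+d}\le\dim G_j$, which fails for $j<e-1$. Hence $r_z(\mathfrak{m}^d)=\lceil (e-1)/d\rceil$ exactly. The price is checking that $z^*$ is regular, which here is immediate from coprimality in the UFD $\mathbb{F}_2[x,y]$.

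Two minor simplifications are available. First, the lifting needs only Nakayama: $z\mathfrak{m}^{2^n}\subseteq\mathfrak{m}^{2^n+2}\subseteq z\mathfrak{m}^{2^n}+\mathfrak{m}\cdot\mathfrak{m}^{2^n+2}$ already forces equality. Completeness also plays no role, since every ideal of a Noetherian local ring is $\mathfrak{m}$-adically closed. Second, your appeal to Proposition \ref{prop:1.1} in Step 1 is redundant, because Step 2 already gives $\mathfrak{m}^{2^n+2}\subseteq zR$ and hence $\Gll(R)\le 2^n+2$.
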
 
\text{}\\
\begin{proof}
We show that $\mathfrak{m}^{2^n+2} \subset z \mathfrak{m}^{2^n}$. The $2^n+1$ generators $\left\{x^{2^n-i}y^i \right\}_{i=0}^{2^n}$ of $\mathfrak{m}^{2^n}$ give us the following $2^n+1$ generators $z_i$ of $z\mathfrak{m}^{2^n}$: \\
\begin{align*}
&z_0=x^{{2^n}+2}+x^{{2^n}+1}y+x^{2^n}y^2&
\\\\
&z_1=x^{2^n+1}y + x^{2^n}y^2 + x^{2^n-1}y^3&
\\\\
&\vdots& 
\\\\
&z_{2^n}=x^2 y^{2^{n}} + xy^{{2^{n}}+1} +y^{{2^{n}}+2}&.
\end{align*}
\text{}\\
and in general,
\begin{align*}
&z_i=x^{2^{n}+2-i}y^i+x^{2^{n}+1-i}y^{i+1}+x^{2^{n}-i}y^{i+2}&
\end{align*}
\text{}\\ for $0 \leq i \leq 2^n$. By Lemma \ref{lemma:3.1}, we have \\
\[
(x+y)^{2^n -1}=\sum\limits_{i=0}^{2^n -1} x^{2^n -1-i}y^i
\] \text{}\\
and \\
\begin{equation}\label{equation:3.2}
0 = xy(x+y)^{2^n -1} = \sum\limits_{i=0}^{2^n -1} x^{2^n-i}y^{i+1}.
\end{equation} \text{}\\
There are $2^n$ terms on the right side of equation \ref{equation:3.2}. We either have $2^n=1$ mod $3$ or $2^n=2$ mod $3$. We show that in both cases, $\mathfrak{m}^{2^n+2}=z \mathfrak{m}^{2^n}$. First suppose that $2^n=1$ mod $3$. By equation \ref{equation:3.2}, we have 
\[
x^{2^n+1}y=\sum\limits_{i=1}^{2^n -1} x^{2^n+1-i}y^{i+1} = \sum\limits_{i=1}^{(2^n -1)/3} z_{3i-1} \in z \mathfrak{m}^{2^n}
\] and 
\[
x^{2^n}y^2=\sum\limits_{i=1}^{2^n -1} x^{2^n-i}y^{i+2} = \sum\limits_{i=1}^{(2^n -1)/3} z_{3i} \in z \mathfrak{m}^{2^n}.
\] \text{}\\\\ It then follows from the definition of the elements $z_i$ that $\left\{z_i \right\}_{i=0}^{2^n} \subseteq z \mathfrak{m}^{2^n}$ and $\mathfrak{m}^{2^n+2}=z \mathfrak{m}^{2^n}$. Now suppose that $2^n=2$ mod $3$. By equation \ref{equation:3.2}, we have 
\[
x^{2^n}y + x^{2^n-1}y^2=\sum\limits_{i=2}^{2^n -1} x^{2^n-i}y^{i+1}.
\]\text{}\\ Therefore, 
\[
x^{2^n+1}y + x^{2^n}y^2=\sum\limits_{i=2}^{2^n -1} x^{2^n+1-i}y^{i+1} = \sum\limits_{i=1}^{(2^n-2)/3} z_{3i} \in z \mathfrak{m}^{2^n}
\] and 
\[
x^{2^n}y^2 + x^{2^n-1}y^3=\sum\limits_{i=2}^{2^n -1} x^{2^n-i}y^{i+2} = \sum\limits_{i=1}^{(2^n-2)/3} z_{3i+1} \in z \mathfrak{m}^{2^n}.
\] \text{}\\\\ It then follows from the definition of the elements $z_i$ that $\left\{z_i \right\}_{i=0}^{2^n} \subseteq z \mathfrak{m}^{2^n}$ and $\mathfrak{m}^{2^n+2}=z \mathfrak{m}^{2^n}$. Therefore, 
\[
\left(\mathfrak{m}^2 \right)^{2^{n-1}+1}=z \left(\mathfrak{m}^2 \right)^{2^{n-1}}
\]\text{}\\ and $r_{z}(\mathfrak{m}^2) \leq 2^{n-1}$. Since $R$ has no nonzerodivisors of the form $\alpha x+\beta y$, where $\alpha, \beta \in k$, it follows from Proposition \ref{prop:1.2} that \\\\
\[
2^n+2=e(R)+1 \leq \text{g}\ell\ell(R) \leq 2(r_{z}(\mathfrak{m}^2)+1) \leq 2(2^{n-1}+1)=2^n+2.
\] \text{}\\ So  
\[
2(2^{n-1}+1)=\text{g}\ell\ell(R)=2(r_{z}(\mathfrak{m}^2)+1)
\] \text{}\\ and
\[
r_{z}(\mathfrak{m}^2)=2^{n-1}.
\] 
\end{proof}
\text{}

\section*{Acknowledgements}
R.B. and G.K. would like to thank the Trinity College Faculty Research Committee for financially supporting this research.
\text{} \\

\bibliographystyle{amsplain}
}
\end{document}